\patchcmd{\algorithmic}{\addtolength{\ALC@tlm}{\leftmargin} }{\addtolength{\ALC@tlm}{\leftmargin}}{}{}
\DeclareMathOperator{\BOX}{box}
\newcommand{\paren}[1]{\left( #1 \right)}
\newcommand{\RHbox}[2]{\BOX\!\paren{#1;#2}}
\newcommand{\ceil}[1]{\left\lceil #1 \right\rceil}
\newcommand{\floor}[1]{\left\lfloor #1 \right\rfloor}
\newcommand{\Z}{\mathbb{Z}}
\newcommand{\R}{\mathbb{R}}
\newcommand{\set}[1]{\left\{ #1 \right\}}
\DeclareMathOperator{\relhaus}{\mathcal{RH}} 
\DeclareMathOperator{\relhausR}{\overrightarrow{\mathcal{RH}}} 
\newcommand{\RH}[1]{\relhaus\!\paren{#1}} 
\newcommand{\RHr}[1]{\relhausR\!\paren{#1}} 
\newcommand{\RHd}[1]{\relhaus_d\!\paren{#1}} 
\newcommand{\RHrd}[1]{\relhausR_d\!\paren{#1}} 
\newcommand{\abs}[1]{\left| #1 \right|}
\newcommand{\bigOh}[1]{\mathcal{O}\!\paren{#1}}
\definecolor{salmon}{RGB}{250,128,114}
\setlist[enumerate]{leftmargin=.5in}
\setlist[itemize]{leftmargin=.5in}
\crefname{hypothesis}{Hypothesis}{Hypotheses}
\title{A linear-time algorithm and analysis of graph Relative Hausdorff distance}
\author{Sinan G. Aksoy\thanks{Pacific Northwest National Laboratory, Richland, WA, 99352 
  (\email{sinan.aksoy@pnnl.gov}, \email{kathleen.nowak@pnnl.gov}, \email{stephen.young@pnnl.gov}).}
\and Kathleen E. Nowak\footnotemark[2]
\and Stephen J.\ Young\footnotemark[2]}
\begin{document}

\maketitle

\begin{abstract}
Graph similarity metrics serve far-ranging purposes across many domains in data science. As graph datasets grow in size, scientists need comparative tools that capture meaningful differences, yet are lightweight and scalable. Graph Relative Hausdorff (RH) distance is a promising, recently proposed measure for quantifying degree distribution similarity. In spite of recent interest in RH distance, little is known about its properties. Here, we conduct an algorithmic and analytic study of RH distance. In particular, we provide the first linear-time algorithm for computing RH distance, analyze examples of RH distance between pairs of real-world networks as well as structured families of graphs, and prove several analytic results concerning the range, density, and extremal behavior of RH distance values.
\end{abstract}

\begin{keywords}
Relative Hausdorff distance, degree distribution, graph similarity measure, network science
\end{keywords}

\begin{AMS} 
  	 05C82, 05C07, 05C85
\end{AMS}

\section{Introduction and Background}

The need to quantify the similarity between graphs arises in domains ranging from graph sampling techniques \cite{Lee2006, Leskovec2006, Stumpf2005}, generative model selection \cite{Janssen2012, Motallebi2013}, botnet discovery \cite{Yin2013}, anomaly detection \cite{lafond2014anomaly, Ranshous2015}, classification and clustering of graph ensembles \cite{Airoldi2011, F.Costa2007}, and epidemic spreading \cite{Montanari2010, Pastor-Satorras2002}. In choosing a similarity function, the data scientist has no dearth of options at her disposal. Graph edit distance \cite{Sanfeliu1983}, maximum common subgraph based distance \cite{Fernandez2001}, iterative vertex-neighborhood methods \cite{Blondel2004, Kleinberg1999}, or indeed the difference in any selection of structural metrics like graph diameter, clustering coefficients, modularity, centrality, or eigenvalues, could potentially serve as similarity measures. However, as graph data becomes more massive, varied, and dense, there is an increasing need for comparison tools that are not only informative, but {\it lightweight} and {\it scalable}. In this regard, a number of tools unfortunately fall short. For example, while the aforementioned edit distance provides a nuanced and intuitive measure of similarity, its exact computation is NP-complete and even its approximation is APX-hard \cite{Lin1994}. 

One quantity that is easy to extract from data, and often used when comparing graphs, is the degree distribution. Perhaps the most fundamental and oft-studied notion in network science, the degree distribution plays an important role in graph modeling, algorithms, and resilience \cite{Barabasi1999, Cohen2000, Newman2003}. Due partly to its tractability and heavy-tailed nature, the degree distribution is particularly important for analyzing massive graphs like the Internet \cite{Broder2000, Faloutsos1999a}. Notwithstanding its ubiquity and importance, the degree distribution has limitations in capturing network structure. For example, it is straightforward to construct ensembles of graphs with identical degree distributions but strikingly different structural properties \cite{Doyle2005}. Nonetheless, because of its practicality, accessibility, and interpretability, the degree distribution remains a key representation of interest in graph data analysis.

Despite the apparent popularity of degree distributions, there is little consensus on how to meaningfully quantify their similarity. Although elegant and succinct, closed-form characterizations like the famous power-law are prone to statistical error when fitted to empirical data \cite{Clauset2009}.  A number of general-purpose statistical tools for probability distributions, such as the Kolmogorov-Smirnov (KS) test statistic, Earth movers distance, or percentile methods \cite{Janssen2012}, avoid imposing assumptions on degree distribution shape but, nonetheless, lack stringency in the graph setting \cite{Aliakbary2015, Matulef2017, Simpson2015}. Last but not least, researchers have proposed feature extraction methods which, based on the standard deviation of the degree distribution, extract a fixed-length feature vector from the degree distribution which is then used to define a distance function between distributions \cite{Aliakbary2015}. While a single, definitive metric for degree distribution comparison is clearly unrealistic, we believe a detailed, analytic study of the metrics themselves provides utility and garners insight into what the different metrics capture.

In this work, we conduct and algorithmic and analytical study of a newly proposed, sophisticated measure of degree distribution similarity called {\it Relative Hausdorff (RH) distance}. Graph RH distance was first introduced by Simpson, Seshadhri and McGregor \cite{Simpson2015} in the context of {\it small-space streaming algorithms}. 
Given a sequence or stream of edges $e_1, \dots, e_m$ of a graph, they devised an algorithm to estimate the graph's degree distribution when only a small fraction (typically less than 1\%) of the edge stream can be stored in memory at a time before being forgotten. 
In order to score the performance of this algorithm, they sought a similarity measure by which to quantitatively compare two graph degree distributions. Unsatisfied with the flaws in existing metrics commonly used for comparing degree distributions, Simpson et al. propose a new measure inspired by the Hausdorff metric from topology \cite{Hausdorff1914}, aimed at capturing degree distribution closeness at all scales.


In the context of degree distribution comparison, the purported benefits of RH distance over other metrics stem from a combination of factors. First, RH distance is flexible in tolerating variable error in both vertex degrees and their relative frequencies. This is especially useful for comparing heavy-tailed distributions typical of real graphs: for low degrees (when frequencies are large), RH distance allows for some error in frequency but less so in degree values whereas for high degrees (when frequencies are small), RH distance allows more error in degree estimates but less in frequency. Second, while flexible in this regard, RH distance is also stringent in requiring closeness at {\it every} point of the head, tail, and middle of degree distributions. Accordingly, any outlier behavior in one distribution must be approximated in the other. In contrast, a number of existing metrics like KS distance are insensitive to differences in the tails of degree distributions \cite{Simpson2015}. For these reasons, RH distance appears a promising tool for graph degree distribution comparison. 
Yet despite a spate of subsequent work \cite{Aksoy2018, Eden2018, Matulef2017, Stolman2017} applying RH distance to graph data, as well as recent work by the authors analyzing RH distance as an anomaly-detection method in cyber-security applications \cite{rhCyber}, little is known about the fundamental properties of this metric.


%

We organize our work as follows: in Section \ref{sec:prelims}, we define the original RH metric, as well as related variants from the literature. In Section \ref{sec:alg}, improving upon a previously known quadratic algorithm, we propose a new algorithm for computing RH distance and show its time-complexity is linear in the maximum degrees of the graphs being compared. This algorithm is not only of practical interest, but also provides a blueprint for analytically determining the RH distance between families of graphs. In Section \ref{sec:exs}, we obtain closed-form solutions for the RH distance between common families of graphs; these examples demonstrate that, contrary to its description in the literature as a ``distance metric," RH distance does not satisfy the triangle inequality. In Section \ref{sec:analysis} we analyze the possible values one can attain when computing the RH distance between two graphs. In particular, we (1) determine the {\it range} of possible RH values by establishing sharp upper bounds on the maximum RH distance between graphs; (2) show that the RH distance is {\it dense} in the real numbers; and (3) characterize the {\it extremal} behavior of the RH distance between graphs on a fixed vertex set that differ by a single edge. Throughout, we compare the theoretical results against those obtained by calculating the RH distance on pairs of real-world graphs from the Stanford Network Analysis Project (SNAP) \cite{leskovec2012stanford}.  This collection contains 40 graphs\footnote{All datasets were processed as simple graphs, ignoring any multiple, weighted, directed or self-loop edges.} with a variety of different structural properties and sizes. Finally, in Section \ref{sec:conc} we outline open questions on RH distance, and briefly suggest avenues for future work.



\section{Preliminaries} \label{sec:prelims}

The Relative Hausdorff distance between graphs is based on their complementary cumulative degree histograms (ccdh), which summarizes their vertex degree counts. As the ccdh is closely related to the oft-mentioned degree sequence and degree distribution of graphs, we define all three below for clarity. 


\begin{definition}
For an $n$-vertex graph $G=(V,E)$ with no isolated vertices, let $d(v)$ denote the degree of a vertex $v$. 
\begin{itemize}
\item[$(a)$] The degree sequence of $G$ is $\paren{d(v_i)}_{i=1}^{n}$.
\item[$(b)$] The degree distribution (i.e. degree histogram) of $G$ is $\paren{n(k)}_{k=1}^{\infty}$, where $n(k)=|\{v \in V : d(v)=k\}|$.
\item[$(c)$] The complementary cumulative degree histogram (ccdh) of $G$ is $\paren{N(k)}_{k=1}^{\infty}$, where $N(k)=\sum_{i=k}^\infty n(i)$.\end{itemize}
\end{definition}

Up to vertex labeling, all three of the above notions are equivalent in the sense that, for a given graph, each can be uniquely obtained from any of the others. However, for ease of exposition, it is often preferable to summarize degree counts using the ccdh, which (as noted in \cite{Simpson2015}) is typically less noisy than the degree distribution and additionally has the nice property of being monotonically decreasing. When clear from context, we will slightly abuse notation and write $F(d)$ for a graph $F$ to mean the value of the ccdh of $F$ at $d$. Furthermore, we let $\Delta(F)$ denote the maximum degree of graph $F$. Below, we present the original definition of RH distance, using slightly different notation than in \cite{Simpson2015}. 


\begin{definition}[Discrete RH \cite{Simpson2015}] \label{def:origRH}Let $F,G$ be graphs. The {\it discrete directional Relative Hausdorff distance} from $F$ to $G$, denoted $\RHrd{F,G}$, is the minimum $\epsilon$ such that
\[
\forall d \in \{1,\dots, \Delta(F)\}, \exists d' \in \{1, \dots, \Delta(G)+1\} \mbox{ such that } |d-d'| \leq \epsilon d \mbox{ and } |F(d)-G(d')|\leq \epsilon F(d). 
\]
We call $\RHd{F,G}=\max\{\RHrd{F,G},\RHrd{G,F}\}$ the {\it discrete Relative Hausdorff distance} between $F$ and $G$. 
\end{definition}


By definition, $\RHd{F,G}=\epsilon$ means that for every degree $k$ in the graph $F$, $F(k)$ is within $\epsilon$-fractional error of some $G(k')$, where $k'$ is within $\epsilon$-fractional error of $k$. In this sense, the RH measure is flexible in accommodating some error in both vertex degree values as well as their respective counts, yet strict in requiring that {\it every point} in $F$ be $\epsilon$-close to $G$ (and vice versa). Crucially, we note that the ccdh underlying Definition \ref{def:origRH} is defined on a discrete domain.

In \cite{Matulef2017, Stolman2017}, Matulef and Stolman argue that the discrete domain of the ccdh in the above definition of RH distance poses a potential flaw. In particular, they note that this may lead to counterintuitively large RH distances between pairs of graphs which are intuitively similar. Indeed, they give an example\footnote{Let $G$ consist of the complete bipartite graph $K_{2,n-2}$ and $G'$ be the same graph with one edge removed. By examining their respective ccdhs, it is clear that $|G'(n)-G(d)|\geq 1$ for every $d$; hence RH distance as given in Definition \ref{def:origRH} is 1.} of two graphs on the same vertex set which differ only by a single edge, yet still have (very large) RH distance of 1, for all values of $n$. To ameliorate this issue, they propose using a {\it smooth ccdh} in which each successive pair of points is connected via line segments. More formally, we define the smooth ccdh as follows.

\begin{definition} For a graph $G=(V,E)$, let $f(k)=|\{v: d(v) \geq k \}|$. The {\it smooth ccdh} of a graph $G$ is a function $\varphi: \mathbb{Z}_{\geq 1} \times [0,1) \to \mathbb{R}_{\geq 0}$ defined by 
\[
\varphi(x,\epsilon)=(1-\epsilon)f(x)+\epsilon f(x+1).
\] 
For ease of notation, we may write $\varphi(x) \coloneqq \varphi(\lfloor x \rfloor, x-\lfloor x \rfloor)$.
\end{definition}

Using smooth ccdhs, the {\it smooth RH distance} is defined analogously as in Definition \ref{def:origRH}, except $d'$ can now assume non-integer values. 

\begin{definition}[Smooth RH \cite{Matulef2017, Stolman2017}] \label{def:continuousRH}Let $F,G$ be graphs. The {\it smooth directional Relative Hausdorff distance} from $F$ to $G$, denoted $\RHr{F,G}$, is the minimum $\epsilon$ such that
\[
\forall d \in \{1,\dots, \Delta(F)\}, \exists d' \in [1, \Delta(G)+1]\mbox{ such that } |d-d'| \leq \epsilon d \mbox{ and } |\varphi_F(d)-\varphi_G(d')|\leq \epsilon \varphi_F(d). 
\]
We call $\RH{F,G}=\max\{\RHr{F,G},\RHr{G,F}\}$ the {\it smooth Relative Hausdorff distance} between $F$ and $G$.
\end{definition}

Comparing Definitions \ref{def:origRH}-\ref{def:continuousRH}, it is clear that for any pair of graphs, their smooth RH distance is at most their discrete RH distance. Henceforth, we focus exclusively on smooth RH distance and thus will abuse notation and use $F(x)$ for $\varphi_F(x)$.  

%

\section{A linear-time algorithm for RH Distance} \label{sec:alg}

The authors in \cite{Matulef2017, Simpson2015, Stolman2017} do not provide a pseudo-code specification of an algorithm for computing (discrete) RH distance between graphs. However, to supplement \cite{Stolman2017}, Python code implementing a straightforward, quadratic-time algorithm for computing smooth RH distance was made publicly available \cite{StolmanSoft2017}. Here, we provide a linear-time algorithm (\textsc{SmoothRH}) for computing RH distance in graphs. As an aside, we note that a linear-time algorithm for computing (non-graph) Hausdorff distance between convex polygons was given in \cite{Atallah1983}, although the same techniques do not apply in the case of graph Relative Hausdorff distance.

\begin{figure}[t!]
\begin{subfigure}[b]{0.5\textwidth}
\[
\scalebox{0.7}{
\begin{tikzpicture}
\draw (0,-1) -- (0,5);
\draw (-1,0) -- (7,0);
\node[below] at (3.5,0) {Degree $k$};
\node[below, rotate=90] at (-0.8,2.7) {$N(k)$};
\node at (4,2) [draw,circle,fill=black,inner sep = 0pt,minimum size =
5pt,label=45:{$(x,y)$}] {};
\node at (6,3) [draw,circle,fill=black,inner sep = 0pt,minimum size =
5pt,label=45:{$\mathbf{r}$}] {};
\node at (2,1) [draw,circle,fill=black,inner sep = 0pt,minimum size =
5pt,label=-135:{$\mathbf{\ell}$}] {};
\draw (2,1) -- (2,3) -- (6,3) -- (6,1) -- (2,1);
\draw[red,thick] (.5,4.75) -- (3,4.25) -- (4,4.25) -- (5,3.75) -- (6.5,3.5) --
(7,0);
\draw[blue,thick] (.5,4) -- (1.5,2) -- (2.25,2) -- (3,1.5) -- (3.76,1.5) -- (4,1.25) -- (5,.75) -- (6,0);
\draw[green,thick] (.5,2) -- (1,.5) -- (3,.5) -- (3.5,0);
\end{tikzpicture}
}
\]
\caption{} \label{F:box}
\end{subfigure}
\qquad
\begin{subfigure}[b]{0.4\textwidth}
\centering
\includegraphics[width=\linewidth]{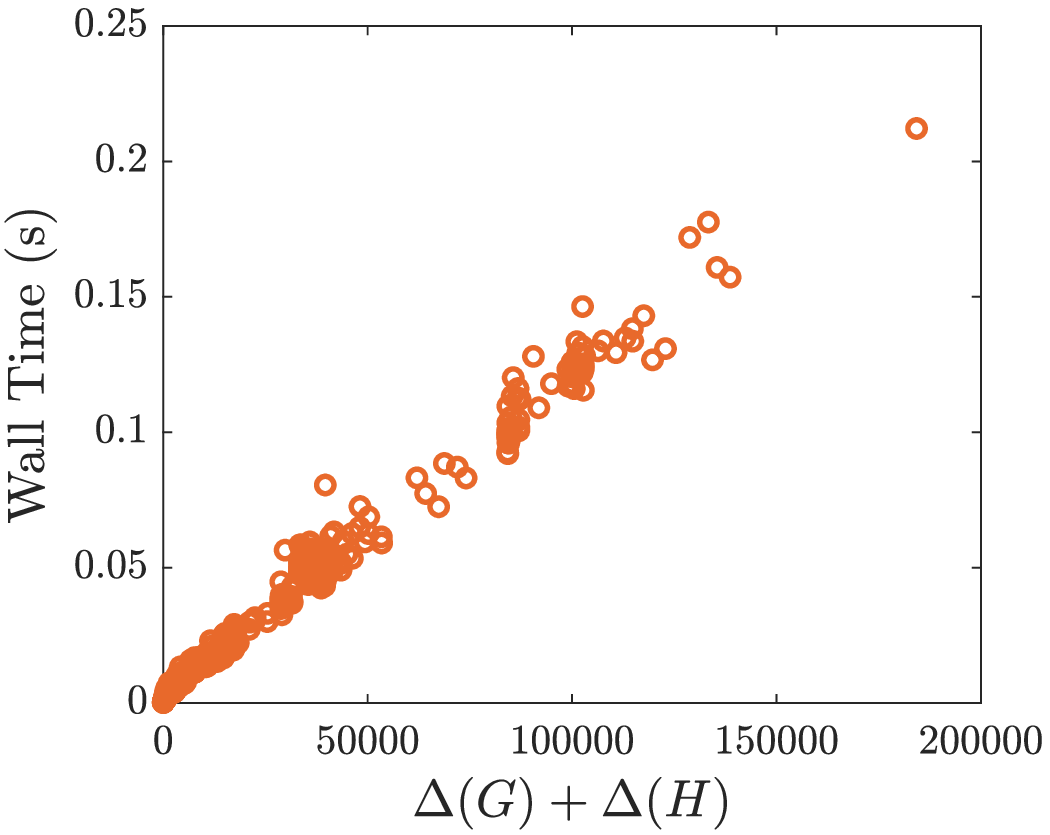}
\caption{} \label{walltime}
\end{subfigure}
\caption{An illustration of $\RHbox{(x,y)}{\epsilon}$ together with ccdh's passing above
  (red), through (blue), and below (green) the box (\ref{F:box}). Wall times for RH distance computation on the SNAP data (\ref{walltime}). }
\end{figure}

\tracingmacros=1
\begin{algorithm}
\caption{Maximum of  $\epsilon$ and smooth RH distance from $(x,y)$ to $H$}
\begin{algorithmic}[1]
\REQUIRE \textsc{SmoothRHdist}$((x,y),H,\epsilon)$
\STATE $(\ell_x,\ell_y) \gets ((1-\epsilon)x,(1-\epsilon)y)$  \COMMENT{lower left corner of $\RHbox{(x,y)}{\epsilon}$}
\STATE $(r_x,r_y) \gets ((1+\epsilon)x,(1+\epsilon)y)$\COMMENT{upper right corner of $\RHbox{(x,y)}{\epsilon}$}
\IF[$H$ passes well above $\RHbox{(x,y)}{\epsilon}$]{$\ell_x < 1$\ \AND \ $H(1) < \ell_y$} \label{alg:cond1}
\RETURN $1 - \frac{H(1)}{y}$
\ELSIF[{$H$ passes below $\RHbox{(x,y)}{\epsilon}$}]{$\ell_x \geq 1$ \ \AND \ $H(\ell_x) < \ell_y $}\label{alg:cond2}
\STATE $j \gets \ceil{\ell_x}$
\WHILE{$H(j-1) < \frac{(j-1)y}{x} \ \AND \ j > 1$}\label{alg:left}
 \STATE $j \gets j-1$ 
\ENDWHILE
\IF{$j = 1$} \label{alg:check}
\RETURN $1 - \frac{H(1)}{y}$
\ELSE
\RETURN $\frac{y + (x-j)H(j-1) -  (x-j+1)H(j)}{y + x\paren{H(j-1) - H(j)}}$ 
\ENDIF
\ELSIF[{$H$ passes above $\RHbox{(x,y)}{\epsilon}$}]{ $r_x < \Delta(H)+1 \ \AND \ H(r_x) > r_y $} \label{alg:cond3}
\STATE $j \gets \floor{r_x}$
\WHILE{$H(j+1) > \frac{(j+1)y}{x}$ }\label{alg:right}
\STATE $j \gets j+1$
\ENDWHILE
\RETURN $\frac{(j+1-x)H(j) + (x-j)H(j+1) -y}{y +x\paren{H(j) - H(j+1)}}$
\ELSE[$H$ passes through $\RHbox{(x,y)}{\epsilon}$] 
\RETURN $\epsilon$
\ENDIF
\end{algorithmic}
\end{algorithm}


\begin{algorithm}
\caption{Smoothed Relative Hausdorff distance between the ccdhs $F$
  and $G$}
\begin{algorithmic}[1]
\REQUIRE \textsc{SmoothRH}$(F,G)$
\STATE $\epsilon \gets 0$
\STATE $\Delta \gets \max\{\Delta(G),\Delta(F)\}$ \COMMENT{maximum degree occurring in $F$ or $G$}
\FOR{$i \in 1,\ldots,\Delta$}
    \IF{$i \leq \Delta(F)$}
    \STATE $\epsilon \gets \textsc{SmoothRHdist}((i,F(i)),G,\epsilon)$ \label{alg:G}
    \ENDIF
\IF{$i \leq \Delta(G)$}
\STATE $\epsilon \gets \textsc{SmoothRHdist}(i,G(i)),F,\epsilon)$ \label{alg:F} 
\ENDIF
\ENDFOR
\RETURN $\epsilon$
\end{algorithmic}
\end{algorithm}

For convenience of notation, for any graph $H$ we will interpret the smooth ccdh as taking value 0 for any $x > \Delta(H)+1$ even though this region is not in the domain. Before beginning the analysis of the \textsc{SmoothRHdist} and
 \textsc{SmoothRH} algorithms, we first introduce notation for the
relative Hausdorff equivalent of a ball, 
   \[\RHbox{(x,y)}{\epsilon} =
     \set{ (s,t) \in \R^2 \colon |x-s|  \leq \epsilon x, |y -t| \leq \epsilon y}.\]
    Thus, $\RHr{F,G}$ may be thought of as the least $\epsilon$ such that for all $1 \leq d \leq \Delta(F)$
     the smooth ccdh for $G$ intersects $\RHbox{(d,F(d))}{\epsilon}$.  We note that if $\epsilon < \epsilon'$, then $\RHbox{(x,y)}{\epsilon} \subset \RHbox{(x,y)}{\epsilon'}$ and hence the correctness of \textsc{SmoothRH} follows immediately from the correctness of \textsc{SmoothRHdist}.

To establish the correctness of \textsc{SmoothRHdist}, we first note that as the smooth ccdh's are monotonically decreasing it is easy to determine whether the smooth ccdh for $H$ intersects $\RHbox{(x,y)}{\epsilon}$ for some fixed $(x,y)$ and $\epsilon$.  Specifically,  the smooth ccdh of $H$ intersects $\RHbox{(x,y)}{\epsilon}$ if and only if either:
     \begin{itemize}
     \item $(1-\epsilon)x < 1$ and $H(1) \geq (1-\epsilon)y$, or
     \item the ccdh of $H$ is defined at $(1-\epsilon)x$ and is greater than $(1-\epsilon)y$ and is undefined at $(1+\epsilon)x$, or
     \item the ccdh of $H$ is defined at $(1-\epsilon)x$ and $(1+\epsilon)x$ and is greater than $(1-\epsilon)y$ at $(1-\epsilon)x$ and is less than $(1+\epsilon)y$ at $(1+\epsilon)x$.
     \end{itemize}
     It is straightforward to verify that the combined negation of the conditions on lines \ref{alg:cond1}, \ref{alg:cond2}, and \ref{alg:cond3} results in one of these conditions holding, thus $\RHbox{(x,y)}{\epsilon}$ intersects the smooth ccdh of $H$ and \textsc{SmoothRHdist} correctly returns $\epsilon$.

     We now consider the situation when the smooth ccdh of $H$ does not intersect any portion of $\RHbox{(x,y)}{\epsilon}$.  Broadly speaking, this situation can be defined into two cases; the ccdh is above the box or the ccdh is below the box. These scenarios are illustrated by the red (above the box) and green (below the box) lines in Figure \ref{F:box}.

    In the case where $\RHbox{(x,y)}{\epsilon}$ is below the smooth ccdh of $H$, the return value of $\textsc{SmoothRHdist}$ should be the least $\epsilon'$ such that $\mathbf{r} = \paren{(1+\epsilon')x,(1+\epsilon')y}$ is on the smooth ccdh of $H$.  Noting that $\paren{(1+\epsilon')x,(1+\epsilon')y}$ implicitly defines a line $L$ and the smoothed ccdh is piecewise linear, we can find such an $\epsilon'$ by identifying a pair $(j,j+1)$ where $L$ and the smooth ccdh for $H$ cross.  This achieved by an iterative search over such pairs starting on line \ref{alg:right} and returning $\epsilon'$ which determines the unique point of intersection between $L$ and the smooth ccdh of $H$. 

For the case where $\RHbox{(x,y)}{\epsilon}$ is above the smooth ccdh of $H$, there are essentially two cases; either  
    the line $L$ implicitly defined by $\paren{(1-\epsilon')x,(1-\epsilon')y}$ passes through the smooth ccdh of $H$ or the desired $\epsilon'$ is given by when $(1-\epsilon')y = H(1)$. Similarly as to the case where $\RHbox{(x,y)}{\epsilon}$ is below the smooth ccdh of $H$, the intersection point with $L$ is determined by iteratively searching to the left. The possibility that $L$ doesn't intersect with the smooth ccdh handled by the cases on line \ref{alg:cond1} and \ref{alg:check}.  Together these three cases yield the following result. 
\begin{lemma}
Algorithm \textsc{SmoothRHdist} correctly calculates the maximum of
$\epsilon$ and the relative
Hausdorff distance between the point $(x,y)$ and the complimentary
cumulative degree distribution $H$.
\end{lemma}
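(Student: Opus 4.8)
The plan is to reduce correctness to a single geometric observation: as $\epsilon'$ ranges over $[0,\infty)$, both corners $\ell=\paren{(1-\epsilon')x,(1-\epsilon')y}$ and $r=\paren{(1+\epsilon')x,(1+\epsilon')y}$ of $\RHbox{(x,y)}{\epsilon'}$ are scalar multiples of $(x,y)$ and hence slide along the single ray $L=\set{t(x,y)\colon t\ge 0}$ through the origin and $(x,y)$. Since the boxes are nested in $\epsilon'$, the directional RH distance from $(x,y)$ to $H$ is the least $\epsilon'$ at which $\RHbox{(x,y)}{\epsilon'}$ first meets the smooth ccdh of $H$, and the value the algorithm must return is the maximum of that threshold with the input $\epsilon$. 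First I would dispose of the intersecting case: using monotonicity of the smooth ccdh, I would confirm (as sketched before the statement) that the joint negation of the tests on lines \ref{alg:cond1}, \ref{alg:cond2}, and \ref{alg:cond3} is exactly the condition that $\RHbox{(x,y)}{\epsilon}$ already meets $H$. In that event the threshold is at most $\epsilon$, so the maximum equals $\epsilon$, and returning $\epsilon$ on the final \textsc{else} branch is correct. Conversely, in every separating case the box does not meet $H$, so the threshold strictly exceeds $\epsilon$ and the maximum is the threshold itself, which is what the algorithm then computes.

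Next I would treat the two separating cases, first showing they are mutually exclusive: because $H$ is nonincreasing, one cannot simultaneously have $H(\ell_x)<\ell_y$ (curve below the box at its left edge) and $H(r_x)>r_y$ (curve above the box at its right edge). When the box lies above the curve (line \ref{alg:cond2}) first contact under growth occurs at the lower-left corner $\ell$; when the box lies below the curve (line \ref{alg:cond3}) it occurs at the upper-right corner $r$. In each case the contact point is the unique intersection of the ray $L$ with the piecewise-linear $H$, whose existence and uniqueness I would obtain from monotonicity: the function $g(s)=H(s)-(y/x)s$ is strictly decreasing (a nonincreasing $H$ minus a strictly increasing linear term), hence has at most one zero, and the entry tests fix its sign at $\ell_x$ (resp.\ $r_x$).

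I would then analyze the two search loops via the sign of $g$. For the right search (line \ref{alg:right}) the loop invariant is $g(j)>0$, maintained because the test $H(j+1)>(j+1)y/x$ is precisely $g(j+1)>0$; the loop halts with $g(j)>0\ge g(j+1)$, locating the crossing in $[j,j+1]$, and it terminates since $H$ reaches $0$ by $\Delta(H)+1$ while $L$ keeps increasing. Symmetrically, the left search (line \ref{alg:left}) maintains $g(j)<0$ and locates the crossing in $[j-1,j]$. On the located segment the smooth ccdh is the linear interpolant of its integer endpoints, so substituting the ray point $(tx,ty)$ into the segment's equation and solving the resulting linear equation for $t$ gives $\epsilon'=t-1$ in the right case and $\epsilon'=1-t$ in the left case; I would carry out this elementary solve and verify it reproduces the two printed closed-form return values verbatim.

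Finally I would handle the domain boundary at $x=1$. If the box already extends left of the domain ($\ell_x<1$) while $H(1)<\ell_y$ (line \ref{alg:cond1}), or if the left search reaches $j=1$ (line \ref{alg:check}), then $g$ is still negative at $s=1$, so the crossing of $L$ with $H$ would lie at some $s^*<1$ outside the domain; in this regime first contact instead occurs where the box's bottom edge descends to the curve's maximal admissible value $H(1)$, i.e.\ when $(1-\epsilon')y=H(1)$, yielding the returned value $1-\tfrac{H(1)}{y}$. The step I expect to be the main obstacle is precisely this boundary bookkeeping, together with stating the loop invariants cleanly and proving termination; by contrast, the algebraic identity between the geometric intersection and the closed-form returns is routine once the correct segment has been pinned down.
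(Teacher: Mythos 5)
Your proof is correct and follows essentially the same route as the paper's argument: reduce everything to the observation that both corners of $\RHbox{(x,y)}{\epsilon'}$ slide along the ray through the origin and $(x,y)$, dispose of the intersecting case via the negated branch conditions, locate the crossing segment of the piecewise-linear ccdh by the two iterative searches, and fall back to $1-\tfrac{H(1)}{y}$ when the crossing would lie left of the domain. Your write-up is somewhat more detailed than the paper's sketch (the explicit sign function $g$, the loop invariants and termination, and the mutual exclusivity of the two separating cases are all left implicit in the paper), but the underlying argument is the same.
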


We now turn to the overall run time of the algorithm \textsc{SmoothRH}.

\begin{theorem} \label{thm:lineartime}
The algorithm \textsc{SmoothRH} calculates the smooth Relative
Hausdorff distance between two complimentary cumulative degree
distributions $F$ and $G$ in time at most $\Theta\paren{\Delta(G) + \Delta(F)}$.
\end{theorem}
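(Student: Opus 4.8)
The plan is to charge the entire running time to the number of iterations of the two \texttt{while} loops and then bound that number by an amortized argument driven by the monotonicity of $\epsilon$. First I would observe that the outer loop of \textsc{SmoothRH} executes $\Delta=\max\{\Delta(F),\Delta(G)\}$ times and invokes \textsc{SmoothRHdist} at most twice per iteration (lines~\ref{alg:G} and~\ref{alg:F}), and that every instruction of \textsc{SmoothRHdist} outside the loops on lines~\ref{alg:left} and~\ref{alg:right} runs in $O(1)$ time. Thus the total cost is $O(\Delta)$ plus the aggregate number of \texttt{while}-loop iterations over the whole run, and it suffices to bound the latter by $O(\Delta(F)+\Delta(G))$.

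I would partition the iterations into four classes according to the direction of the call ($F\to G$ on line~\ref{alg:G} versus $G\to F$ on line~\ref{alg:F}) and the search direction (left, line~\ref{alg:left}, versus right, line~\ref{alg:right}); by symmetry it is enough to treat the two $F\to G$ classes, whose loops scan the degrees of $G$. Let $a_k$ and $b_k\ (\ge a_k)$ denote the input and output values of $\epsilon$ in the $k$-th such call, made at degree $i_k$, where $i_1<i_2<\cdots$. Two monotonicity facts drive the argument: (i) $\epsilon$ never decreases (each call returns the maximum of its input and the computed value), so $a_{k+1}\ge b_k$; and (ii) the abscissa $c_k$ at which the ray through the origin and $(i_k,F(i_k))$ meets the smooth ccdh of $G$ — which is, up to rounding, where the loop terminates — is nondecreasing in $k$. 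Fact~(ii) holds because the slope $F(i)/i$ is nonincreasing (the ccdh $F$ is monotone) and a ray of smaller slope meets the decreasing ccdh of $G$ farther to the right.

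For the rightward class (line~\ref{alg:right}, triggered on line~\ref{alg:cond3}) the crossing lies to the right of $i_k$, so $\mathrm{end}_k=\floor{c_k}$ while $\mathrm{start}_{k+1}=\floor{(1+a_{k+1})i_{k+1}}$; fact~(i) gives $(1+a_{k+1})i_{k+1}\ge(1+b_k)i_k=c_k$, hence $\mathrm{start}_{k+1}\ge\mathrm{end}_k$. Since then $\mathrm{start}_1\le\mathrm{end}_1\le\mathrm{start}_2\le\cdots\le\Delta(G)+1$, the per-call lengths $\mathrm{end}_k-\mathrm{start}_k$ sum to at most $\Delta(G)+1$. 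The leftward class (line~\ref{alg:left}, triggered on line~\ref{alg:cond2}) is the crux, since the search moves left while $i_k$ increases, so endpoints do not telescope directly. Here I would bound the cost of call $k$ by $\mathrm{start}_k-\mathrm{end}_{k-1}$ (valid because $\mathrm{end}_k\ge\mathrm{end}_{k-1}$ by fact~(ii)), estimate $\mathrm{start}_k\le(1-a_k)i_k+1\le(1-b_{k-1})i_k+1=(c_{k-1}/i_{k-1})\,i_k+1$ using fact~(i), and then use that in the leftward case the crossing lies to the \emph{left} of the evaluation degree, so $c_{k-1}\le i_{k-1}$ and hence $c_{k-1}/i_{k-1}\le1$. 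This yields a per-call bound of $(i_k-i_{k-1})+O(1)$, which telescopes to $O(\Delta(G))$ once the single first call is accounted for separately via $\mathrm{start}_1\le i_1\le\Delta$.

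Summing the four classes bounds the total \texttt{while}-loop work by $O(\Delta(F)+\Delta(G))$, and with the $O(\Delta)$ loop overhead this gives the stated upper bound; since the outer loop alone performs $\Omega(\Delta)$ steps, the running time is $\Theta(\Delta(F)+\Delta(G))$. I expect the leftward class to be the main obstacle: the naive estimate bounds its cost by $\sum_k(b_k-a_k)i_k$, which is only $O(\Delta)$ times the relative Hausdorff distance and hence useless when that distance is large (it can be $\Theta(\Delta)$). Converting this into the telescoping sum $\sum_k(i_k-i_{k-1})$ is precisely what forces one to combine the monotonicity of $\epsilon$, the monotonicity of the crossing abscissa, and the geometric fact that in the leftward case the crossing sits to the left of the current degree.
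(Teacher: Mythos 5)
Your reduction to counting \texttt{while}-loop iterations and your treatment of the rightward loop (line \ref{alg:right}) coincide with the paper's argument, which likewise uses the monotonicity of $\floor{(1+\epsilon_i)i}$ to conclude that each $G(k)$ is touched $\bigOh{1}$ times. For the leftward loop (line \ref{alg:left}), however, you take a genuinely different route. The paper argues by contradiction that an intermediate termination $1<j<i$ is impossible: it invokes the boxes $\RHbox{(j-1,G(j-1))}{\epsilon_{j}}$ and $\RHbox{(j,G(j))}{\epsilon_{j+1}}$ from earlier iterations of the \emph{opposite} direction, which already meet the ccdh of $F$, and derives incompatible inequalities; hence the loop either does no work or sweeps all the way to $j=1$, and the latter can occur at most once. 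You instead amortize directly, using the fact that the returned $\epsilon$ places $\paren{(1-\epsilon)x,(1-\epsilon)y}$ on the ccdh of $G$, so the termination point tracks the crossing abscissa $c_k$ of the ray through the origin and $(i_k,F(i_k))$; this abscissa is monotone because $F(i)/i$ is nonincreasing, and combined with $c_{k-1}\le i_{k-1}$ (the crossing sits left of the evaluation degree in this branch) the per-call costs telescope to $\bigOh{\Delta(F)+\Delta(G)}$. This buys a self-contained, purely one-directional analysis with no coupling between the $F\to G$ and $G\to F$ passes, and it is correct as far as it goes.

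The one case you must patch is the $j=1$ termination on line \ref{alg:check}: when $G(1)<F(i)/i$ the ray does not meet the ccdh of $G$ on $[1,\Delta(G)+1]$, the returned value is $1-G(1)/F(i)$ rather than the crossing-based expression, and your identity $(1-b_{k-1})i_{k-1}=c_{k-1}$ has no crossing to refer to. Defining $c_{k-1}\coloneqq(1-b_{k-1})i_{k-1}<1$ in that case preserves both monotonicity facts and the bound $c_{k-1}\le i_{k-1}$, so the telescoping survives; the paper handles the same case separately by showing a full sweep to $j=1$ can happen at most once over the entire run.
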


\begin{proof}
We note that a cursory examination of \textsc{SmoothRH} and
\textsc{SmoothRHdist} reveals that running time of \textsc{SmoothRHdist} 
is driven by the number of times the while loops at Line
\ref{alg:left} and \ref{alg:right} are executed.  It is easy to see
that this gives an $\bigOh{\Delta(F)^2+\Delta(G)^2}$ upper bound on the
run time.  To show that the run time is linear we will in fact show that these while loops are entered at most
$\Delta(F) + \Delta(G)$ times giving a overall run time of
$\bigOh{\Delta(F) + \Delta(G)}$.  We note that by the symmetry of the process, it suffices to show that the total amount of work over all calls of the form $\textsc{SmoothRHdist}\paren{(i,F(i)),G,\epsilon}$ is $\bigOh{\Delta(F)+\Delta(G)}$.  For clarity of notation, we will denote by $\epsilon_i$ the value of $\epsilon$ passed to \textsc{SmoothRH} when processing the pair $(i,F(i))$.

    It is relatively easy to see that computational cost of the while loop at line \ref{alg:right} over all $i$ is at most $\bigOh{\Delta_F + \Delta_G}$ by the monotonicity of the right edge $\RHbox{(i,F(i))}{\epsilon_i}$.  Specifically, as $\epsilon_i$ is a monotonically increasing function of $i$, so is $r_i = \floor{ (1+\epsilon_i)i}$.  But then, other than a constant amount of work associated with each iteration $i$, this implies that for every $1 \leq k \leq \Delta(G)+1$, the value of $G(k)$ is retrieved at most once.  Thus the total accesses to the ccdh of $G$ are bounded by $\bigOh{\Delta(F) + \Delta(G)}$ as desired.

    Now we consider the case where $\RHbox{(i,F(i))}{\epsilon_i}$ is above the smooth ccdh for $G$.  We note that in this case $\ell_i = \ceil{(1-\epsilon_i)i}$ is not a monotonic function of $i$, so we will have to apply different arguments.  To that end, let $j = \ceil{(1-\epsilon_{i+1})i}$, i.e. the value of $j$ which terminates the while loop on line \ref{alg:left}.  Suppose by way of contradiction that, $1 < j < i$.  First we note that,  $\RHbox{(j,G(j))}{\epsilon_{j+1}}$, $\RHbox{(j-1,G(j-1))}{\epsilon_j}$ both contain the smooth ccdh for $F$ and furthermore, the smooth ccdh of $F$ 
      must lie in the union of halfspaces
      \[\mathcal{U} = \set{(x,y) \colon y \geq F(i)} \cup \set{(x,y) \colon y \geq -F(i)(x-i) + F(i)}.\]
      Further, $j-1< (1-\epsilon_{i+1})i$, we have that
      \[ (1+\epsilon_j)(j-1) \leq (1+\epsilon_i)(j-1)< (1+\epsilon_i)(1-\epsilon_{i+1})i < (1+\epsilon_{i+1})(1-\epsilon_{i+1}) < i.\]
      Thus, in order for $\RHbox{(j-1,G(j-1))}{\epsilon_{j-1}}$ to contain the smooth ccdh of $F$, we have that
      \[ F(i) \leq (1+\epsilon_{j-1})G(j-1) \leq (1+\epsilon_i)G(j-1).\] In a similar manner, we have that
      \[ -F(i)\paren{(1+\epsilon_i)j - i} + F(i)\leq (1+\epsilon_i)G(j).\]
      Now letting $\delta = (1-\epsilon_{i+1})i -(j-1)$,  we have by the definition of $\epsilon_{i+1}$, that
      \[ (1-\epsilon_{i+1}) F(i) = G(j-1) + \delta\paren{G(j)-G(j-1)}.\] 
    Combining these inequalities we get that
      \begin{align*}
        \paren{1+\epsilon_i}\paren{1-\epsilon_{i+1}}F(i) &= \paren{1+\epsilon_i}(1-\delta)G(j-1) +\paren{1+\epsilon_i}\delta G(j) \\
                                                         &\geq \paren{1-\delta}F(i) + \delta\left[ -F(i)\paren{(1+\epsilon_i)j - i} + F(i) \right] \\
                                                         &= \left[ (1-\delta) - \delta(1+\epsilon_i)j +\delta i+\delta \right] F(i) \\
                                                         &= \left[ 1 + \delta i - \delta(1+\epsilon_i)j\right] F(i).
      \end{align*}
      As $F(i) > 0$ and $j = (1-\epsilon_{i+1})i + (1-\delta)$, this implies that
      \[ 
      (1+\epsilon_i)(1-\epsilon_{i+1}) \geq  1+ \delta\left[ i -  (1+\epsilon_i)(1-\epsilon_{i+1})i + (1-\delta)(1+\epsilon_i)\right]. 
      \]
      Since $\epsilon_i < \epsilon_{i+1}$, we have that $(1+\epsilon_i)(1-\epsilon_{i+1}) < 1$.  But this yields that the left hand side of the above inequality is strictly smaller than 1, while the right hand side is strictly larger than 1, a contradiction.
      
     Thus, the while loop on line \ref{alg:left} can only terminate when $j = i$ or $j=1$.  The second case represents only a constant amount of work per iteration, contributing at most $\Delta(F)$ to the overall runtime.  For the first, we notice that this occurs only if $G(1) < \frac{1}{i} F(i)$.  In this case, $\epsilon_{i+1} = 1-\frac{G(1)}{F(i)}$ and in particular for all $k \geq i+1$,
      \[ (1-\epsilon_k)F(k) \leq (1-\epsilon_{i+1}) F(i) = G(1).\]  But this implies that line \ref{alg:left} can terminate with $j = 1$ precisely once when called as a subroutine of \textsc{SmoothRH} and hence the total running time is $\Theta\paren{\Delta(F) + \Delta(G)}$, as desired. 
      
  \end{proof}

Lastly, we examine the time complexity of \textsc{SmoothRH} in practice. In Figure \ref{walltime}, we plot wall times for RH distance computation on 40 graph datasets collected from SNAP \cite{leskovec2012stanford}, yielding a total of ${40 \choose 2}=780$ pairs of graphs. This experiment was run on a 2015 Macbook Pro laptop with a 2.8 GHz Intel Core i7 processor using a Python implementation of Algorithm \textsc{SmoothRH}. Observe the linear relationship apparent in Figure \ref{walltime} is congruent with the time complexity stated in Theorem \ref{thm:lineartime}. In the next section, we present the RH distance values between these graphs. 

\section{Examples and the triangle inequality}\label{sec:exs}

\begin{figure}[t!]
\centering
\includegraphics[scale=0.45]{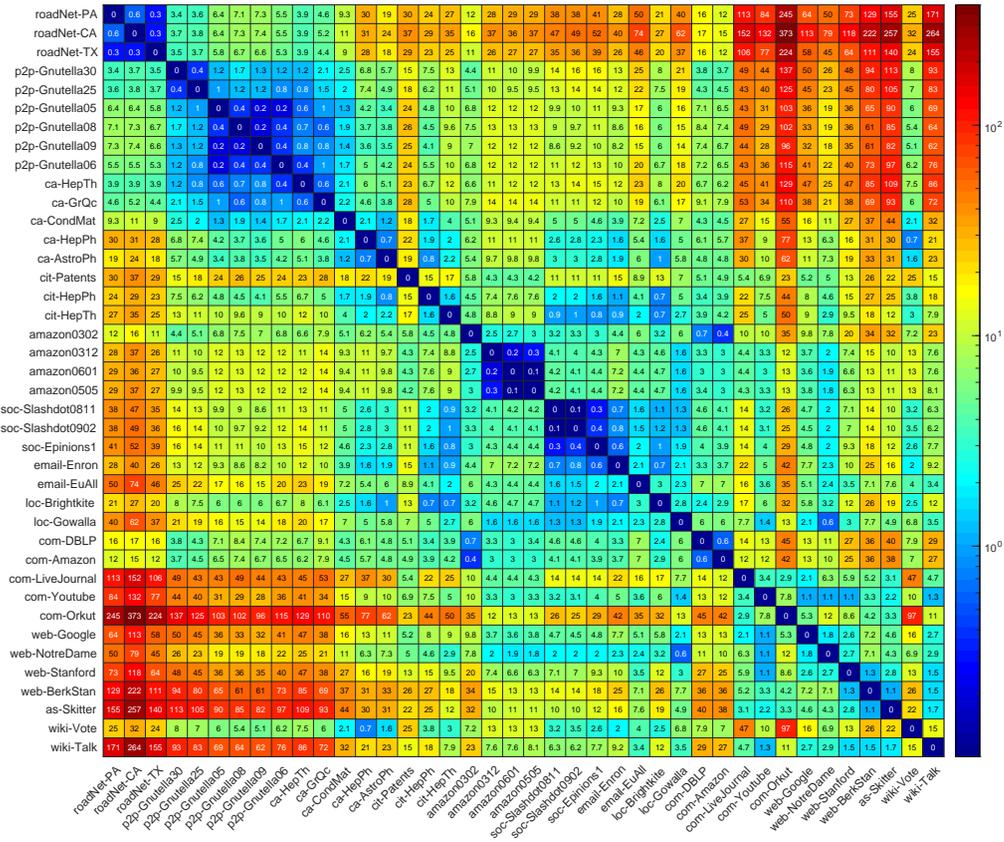}
\caption{Heat map of RH distance values between 40 graph datasets from SNAP \cite{leskovec2012stanford}.}\label{fig:heat}
\end{figure}

In introducing both discrete \cite{Simpson2015} and smooth \cite{Matulef2017} RH distance, the authors examine RH distance on simple families of graphs to illustrate its behavior and compare it with other degree-based comparison metrics. 
In this section, we present examples of RH distance between real graph datasets and analyze RH distance between structured families of graphs. 


Figure \ref{fig:heat} depicts a heat map of RH distance values between pairs of graphs collected from SNAP \cite{leskovec2012stanford}. The datasets are grouped by network type (as indicated by their SNAP label prefix), sorted within each group by maximum degree, and then sorted by group average maximum degree. The RH values range from 0.1 to 373, with mean 20 and median 8.49. The blue blocks on the diagonal identify network types with relatively small intra-group RH distance, such as road, peer-to-peer, and co-authorship networks. In contrast, citation and communication networks feature larger intra-group RH distance values. The red portions in the outer corners of the heat map indicate large RH distance between graphs with large differences in maximum degree. For instance, \texttt{roadNet-CA} and \texttt{com-Orkut} achieve the largest RH distance, and have maximum degree 12 and 33,313, respectively. This pattern reflects that, while not solely controlled by maximum degree, RH distance is sensitive to differences in the tails of degree distributions. 

Next, we turn our attention to analyzing RH distance between structured families of graphs. To this end, we first establish a lemma useful for analytically determining RH distance between graphs whose degree sequence satisfies certain properties. 
Applying this lemma, we then find closed-form expressions for RH distance between complete graphs of different sizes, and between the complete graph and cycle. Lastly, we conclude this section by analyzing these examples to show RH distance does not satisfy the triangle inequality, nor can the definition of RH distance be modified in certain ways to satisfy the triangle inequality.

\begin{lemma}\label{L:flat}
Let $F$ and $G$ be graphs and suppose that $F$ has no vertices of degree $k$ for $d \leq k \leq d'$.  Let $1 \leq x,x' \leq \Delta(G)+1$ and $\epsilon >0$ such that 
\begin{align*}
\abs{d - x} &\leq \epsilon d, & && \abs{d'+1-x'} &\leq \epsilon (d' +1), \\
\abs{F(d) - G(x)} &\leq \epsilon F(d), &  \textrm{ and }&& \abs{F(d'+1) - G(x')} &\leq \epsilon F(d'+1).
\end{align*}
Then for all $d < \delta <$ 
$d'+1$, $G$ intersects $\RHbox{(\delta, F(\delta))}{\epsilon}$.
\end{lemma}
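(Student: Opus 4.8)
The plan is to turn the flatness hypothesis into the statement that the relevant piece of $F$'s smooth ccdh is literally \emph{constant}, so that all the boxes $\RHbox{(\delta,F(\delta))}{\epsilon}$ we must hit share a common vertical band, and then to sandwich each intermediate box between the two boxes that the hypotheses already force $G$ to meet. First I would note that because $F$ has no vertices of degree $k$ for $d \le k \le d'$, the integer ccdh satisfies $f(d) = f(d+1) = \cdots = f(d'+1)$; since the smooth ccdh linearly interpolates consecutive integer values, this gives $F(\delta) = F(d) = F(d'+1)$ for every $\delta \in [d,d'+1]$. In particular, the vertical extent of $\RHbox{(\delta,F(\delta))}{\epsilon}$ is the fixed band $B \coloneqq [(1-\epsilon)F(d),\,(1+\epsilon)F(d)]$, independent of $\delta$.

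Next I would read off what the four hypotheses say geometrically. The point $(x,G(x))$ has second coordinate in $B$ by $\abs{F(d)-G(x)}\le \epsilon F(d)$, and $(x',G(x'))$ likewise lies in $B$ once we use $F(d'+1)=F(d)$; moreover $x,x' \in [1,\Delta(G)+1]$. Since the smooth ccdh of $G$ is monotonically decreasing and defined throughout $[\min(x,x'),\max(x,x')] \subseteq [1,\Delta(G)+1]$, its value $G(s)$ at every $s$ between $x$ and $x'$ is trapped between $G(x)$ and $G(x')$ and therefore lies in $B$ as well.

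The heart of the argument is then a one-dimensional overlap check. Fix $\delta$ with $d < \delta < d'+1$; the box $\RHbox{(\delta,F(\delta))}{\epsilon}$ has horizontal extent $I_\delta \coloneqq [(1-\epsilon)\delta,\,(1+\epsilon)\delta]$ and vertical extent exactly $B$. Writing $a \coloneqq \min(x,x')$ and $b \coloneqq \max(x,x')$, I would verify that $I_\delta$ meets $[a,b]$: from $\delta > d$ and $x \le (1+\epsilon)d$ one gets $a \le x < (1+\epsilon)\delta$, while from $\delta < d'+1$ and $x' \ge (1-\epsilon)(d'+1)$ one gets $(1-\epsilon)\delta < x' \le b$. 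Hence the two intervals overlap, and any $s \in I_\delta \cap [a,b]$ yields $s \in [a,b] \subseteq [1,\Delta(G)+1]$ with $\abs{s-\delta}\le \epsilon\delta$ and $G(s) \in B$, i.e. a point of $G$'s smooth ccdh inside $\RHbox{(\delta,F(\delta))}{\epsilon}$, which is exactly the claim.

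I expect the only genuine bookkeeping—and thus the main obstacle—to lie in the overlap inequalities of the third step. Two points require care: the hypotheses do not fix the order of $x$ and $x'$, so the ``good'' horizontal interval must be taken as $[\min(x,x'),\max(x,x')]$ and both overlap inequalities checked symmetrically; and since RH distances can exceed $1$, the factor $1-\epsilon$ may be nonpositive, so the bound $(1-\epsilon)\delta < x'$ should be argued by cases (it follows from $(1-\epsilon)\delta \le (1-\epsilon)(d'+1)\le x'$ when $\epsilon<1$, and from $(1-\epsilon)\delta \le 0 < 1 \le x'$ otherwise). Everything else, including confirming the witness $s$ stays within $G$'s domain and the degenerate possibility $F(d)=0$ (which merely collapses $B$ to a point without affecting the reasoning), is routine.
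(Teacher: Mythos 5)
Your proposal is correct and follows essentially the same route as the paper's proof: the plateau of $F$ makes all the boxes share the vertical band $[(1-\epsilon)F(d),(1+\epsilon)F(d)]$, monotonicity of $G$ traps $G$ in that band between $x$ and $x'$, and the interval-overlap check $[\min(x,x'),\max(x,x')]\cap[(1-\epsilon)\delta,(1+\epsilon)\delta]\neq\varnothing$ finishes it. Your explicit handling of the ordering of $x,x'$ and of the case $\epsilon\geq 1$ is slightly more careful than the paper's terse version, but the argument is the same.
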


\begin{proof}
We can choose $x$ and $x'$ such that $x \leq x'$ so assume they are chosen as such. Fix $d < \delta < d'+1$ and note that since there are not vertices of degree between $d$ and $d'$, we have that there is some $y = F(d) = F(\delta) = F(d'+1)$.  By assumption we have that $\RHbox{(d,y)}{\epsilon}$ and $\RHbox{(d'+1,y)}{\epsilon}$ both intersect the smooth ccdh of $G$.  Noting that 
$$
(1+\epsilon)F(\delta) \geq G(x) \geq G(x') \geq (1-\epsilon)F(\delta),
$$
$G$ is monotonically decreasing and $[x, x'] \cap [(1-\epsilon) \delta, (1+\epsilon)\delta] \neq \varnothing$, completes the proof.
\end{proof}

As a practical consequence of Lemma \ref{L:flat}, it suffices to consider the end points of the plateaus present in the smooth ccdh of $F$ when calculating $\RHr{F,G}.$  This observation makes it a straightforward exercise to verify the following result:
\begin{lemma}\label{L:complete}
Let $3 \leq n \leq m$, then $\RH{K_n,K_m} = \frac{m-n}{n}$.
\end{lemma}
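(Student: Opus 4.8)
The plan is to make the two smooth ccdhs explicit and then invoke Lemma \ref{L:flat} to reduce each directional distance to a finite check at plateau endpoints. Since every vertex of $K_n$ has degree $n-1$, its ccdh satisfies $f(k) = n$ for $1 \le k \le n-1$ and $f(n) = 0$; hence its smooth ccdh is the constant $n$ on $[1, n-1]$ followed by the segment from $(n-1, n)$ down to $(n, 0)$, and $\Delta(K_n) = n-1$. The same description holds for $K_m$ with $n$ and $n-1$ replaced by $m$ and $m-1$. Each ccdh is thus a single plateau followed by a single descending segment, so by Lemma \ref{L:flat} it suffices to test the boxes centered at the plateau endpoints.

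For $\RHr{K_n, K_m}$ the two endpoints are $(1, n)$ and $(n-1, n)$, both at height $n$. Taking $\epsilon = \frac{m-n}{n}$, the horizontal span of either box stays inside the flat portion $[1, m-1]$ of the ccdh of $K_m$ (using $m \ge n$), where that ccdh is pinned at height $m$. Hence the box meets the ccdh of $K_m$ precisely when its top edge reaches $m$, i.e. when $(1+\epsilon) n \ge m$; this constraint is tight and independent of which endpoint we use, giving both the lower and the upper bound, so $\RHr{K_n, K_m} = \frac{m-n}{n}$.

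For the reverse direction it suffices to show $\RHr{K_m, K_n} \le \frac{m-n}{n}$, which already forces the maximum to equal $\frac{m-n}{n}$. The endpoints of the plateau of $K_m$ are $(1, m)$ and $(m-1, m)$. For the first, the point $(1, n)$ on the plateau of $K_n$ lies in $\RHbox{(1, m)}{\epsilon}$ as soon as $\epsilon \ge \frac{m-n}{m}$, which holds since $\frac{m-n}{m} \le \frac{m-n}{n}$. For the second, I would exhibit the corner point $(n-1, n)$ where the plateau of $K_n$ meets its descending segment and verify directly that $(n-1, n) \in \RHbox{(m-1, m)}{\epsilon}$ with $\epsilon = \frac{m-n}{n}$: the horizontal condition $m - n \le \epsilon(m-1)$ reduces to $n \le m-1$, and the vertical condition $m - n \le \epsilon m$ reduces to $n \le m$, both immediate from $m \ge n$. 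Lemma \ref{L:flat} then covers the interior of the plateau, so $\RHr{K_m, K_n} \le \frac{m-n}{n}$ and therefore $\RH{K_n, K_m} = \frac{m-n}{n}$.

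The only step needing real care is this reverse direction, where $\RHbox{(m-1, m)}{\epsilon}$ is centered far to the right of the entire support of the ccdh of $K_n$ and must nonetheless reach back to engulf a point of it; moreover once $m > 2n$ one has $1 - \epsilon < 0$, so the box spills past the left and bottom boundaries of the domain, and I would note that in that regime the left and lower containment inequalities hold trivially. The hypothesis $n \ge 3$ serves to keep the plateaus nondegenerate and the relevant boxes inside the flat portions of the ccdhs.
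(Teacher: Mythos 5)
Your proof is correct and follows essentially the same route as the paper: both reduce to the plateau endpoints via Lemma~\ref{L:flat}, obtain the lower bound from the box around $(1,n)$ failing to reach the height-$m$ plateau of $K_m$ for smaller $\epsilon$, and verify the upper bound by direct box computations at the four endpoints. The only cosmetic difference is that you exhibit explicit witness points such as $(n-1,n)\in\RHbox{(m-1,m)}{\frac{m-n}{n}}$ where the paper checks corner-coordinate inequalities, which amounts to the same calculation.
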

\begin{proof}
By Lemma \ref{L:flat} it suffices to consider the RH distance from $(1,n)$ and $(n-1,n)$ to the ccdh of $K_m$ and from $(1,m)$ and $(m-1,m)$ to ccdh of $K_n$. We note that for any $\delta > 0$, the upper right corner of $\RHbox{(1,n)}{\frac{m-n}{n} - \delta}$ is $\paren{\frac{m}{n}-\delta, m - \delta n}$.  As $\frac{m}{n}-\delta < m-1$ and $m - \delta n < m$, this implies that $\RH{K_n,K_m} \geq \frac{m-n}{n}$.  To show that $\RH{K_n,K_m} \leq \frac{m-n}{n}$, we first note that $\paren{1 - \frac{m-n}{n}}m = 2m - \frac{m^2}{n} \leq n$, and so $\RHbox{(x,m)}{\frac{m-n}{n}}$ contains a point on the ccdh of $K_n$ as long as $(1-\frac{m-n}{n})x \leq n-1$.  This is obvious for $x =1$ and for $x = m-1$ the inequality $(2n-m)(m-1) \leq n(n-1)$ can be rearranged in to $m-n \leq \paren{m-n}^2$ which clearly holds.  Now we note that $\RHbox{(x,n)}{\frac{m-n}{n}}$ contains $(x,m)$ on the boundary and thus $\RH{K_n,K_m} = \frac{m-n}{n}$.  
\end{proof}

Along similar lines it is relatively straightforward, but tedious to derive closed-form expressions of RH distance between different families of graphs, such as the complete graph and cycle:
\begin{lemma}
Let $3 \leq n,m$, then
\[ \RH{K_n,C_m} = \begin{cases} \frac{n}{m} - 1 & 3 \leq m < \frac{n^2-3n + \sqrt{n^4+2n^3-11n^2}}{4n-10} \\
 1 - \frac{3m}{n + nm - m}&  \frac{n^2-3n + \sqrt{n^4+2n^3-11n^2}}{4n-10} \leq m \leq n\\
 1 - \frac{3m}{n + nm - m} & 5 \leq n < m \leq \frac{n^2 -3n + \sqrt{n^4-4n^3+7n^2}}{n-1}\\
 \frac{m}{n}-1&  \frac{n^2 -3n + \sqrt{n^4-4n^3+7n^2}}{n-1} < m \leq 2n \\
 \frac{3m}{n+m} - 1 & 2n <m  \\ 
\end{cases}. \]
\end{lemma}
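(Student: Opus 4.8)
The plan is to use Lemma \ref{L:flat} to collapse the computation to a finite maximum of point-to-curve distances, evaluate each distance by tracking which edge of an expanding $\RHbox{\cdot}{\cdot}$ first meets the target ccdh, and then read off the five cases by comparing these distances, with the thresholds appearing as roots of quadratics. First I would record the two smooth ccdhs explicitly: the ccdh of $K_n$ equals $n$ on $[1,n-1]$ and drops linearly from $(n-1,n)$ to $(n,0)$, while the ccdh of $C_m$ equals $m$ on $[1,2]$ and drops linearly from $(2,m)$ to $(3,0)$. By Lemma \ref{L:flat}, $\RHr{K_n,C_m}$ is the larger of the distances from the plateau endpoints $(1,n)$ and $(n-1,n)$ to the ccdh of $C_m$, and $\RHr{C_m,K_n}$ is the larger of the distances from $(1,m)$ and $(2,m)$ to the ccdh of $K_n$. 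Calling these four quantities $a_1,a_2,b_1,b_2$, the answer is $\max\paren{a_1,a_2,b_1,b_2}$.

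Next I would evaluate each distance geometrically, using that $\RHbox{(x,y)}{\epsilon}$ has corners $\paren{(1\pm\epsilon)x,(1\pm\epsilon)y}$ moving along the ray through the origin and $(x,y)$. Since the plateau of $K_n$ is wide, the box about $(1,m)$ or $(2,m)$ reaches it by a purely vertical move, so $b_1=b_2=\abs{n-m}/m$; thus $\RHr{C_m,K_n}=\abs{n-m}/m$. The point $(n-1,n)$ lies above the ccdh of $C_m$ (which is $0$ there once $n\ge 4$), so its lower-left corner descends along its ray until it strikes the slant $y=m(3-x)$; solving the resulting linear equation gives $a_2=1-\frac{3m}{n+mn-m}$. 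Finally $a_1$ has three regimes according to whether $(1,n)$ lies above $C_m$ ($m\le n$), or below it with the upper-right corner meeting the plateau ($n\le m\le 2n$), or below it with that corner meeting the slant ($m\ge 2n$), yielding $1-\frac{m}{n}$, $\frac{m}{n}-1$, and $\frac{3m}{n+m}-1$ respectively.

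I would then assemble the maximum by splitting on $m\le n$ versus $m>n$. For $m\le n$, the elementary inequalities $\frac{n}{m}+\frac{m}{n}\ge 2$ and $m\ge\frac{2n}{n-1}$ give $a_1\le b_1$ and $a_1\le a_2$, so $\RH{K_n,C_m}=\max\paren{1-\frac{3m}{n+mn-m},\,\frac{n}{m}-1}$; equating the two terms yields a quadratic in $m$ whose relevant root is the first stated threshold, separating cases (1) and (2). For $m>n$, the inequality $\frac1n>\frac1m$ gives $b_1\le a_1$, so $\RH{K_n,C_m}=\max(a_1,a_2)$; equating $a_2$ with $\frac{m}{n}-1$ yields the second stated threshold, below which $a_2$ dominates (case (3)) and above which $a_1$ dominates, with the internal break at $m=2n$ separating cases (4) and (5). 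I would finish by checking continuity at $m=n$, $m=2n$, and each threshold, and verify $a_1>a_2,b_1$ persists for $m\ge 2n$ via $a_1-b_1=\frac{m^2-mn+n^2}{m(m+n)}>0$.

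I expect the main obstacle to be the geometric bookkeeping of the second step: correctly identifying, in each regime, which corner or edge of the expanding box makes first contact with which linear piece of the target ccdh, especially near the left boundary $x=1$, where the domain constraint can prevent the lower-left corner from reaching the curve and alters the binding condition. A secondary care point is the edge case $n=3$, where $(n-1,n)=(2,n)$ can lie \emph{below} the ccdh of $C_m$ so that the derivation of $a_2$ changes; there, however, $a_1$ is the binding distance, so cases (4) and (5) still hold, and indeed the condition $5\le n$ attached to case (3) reflects exactly that the second threshold fails to admit an integer $m>n$ for $n\le 4$, leaving that case vacuous. Once each sub-distance is pinned down, the remaining work is the routine but lengthy algebra of comparing them and solving the two quadratics to recover the stated thresholds.
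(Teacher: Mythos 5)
Your proposal is correct and follows exactly the route the paper intends: the paper gives no written proof of this lemma, asserting only that it follows ``along similar lines'' from Lemma~\ref{L:flat} as in the $K_n$-versus-$K_m$ computation, which is precisely your reduction to the four plateau-endpoint distances $a_1,a_2,b_1,b_2$ followed by a case comparison. The individual distances, the two quadratic thresholds, and the $n\in\{3,4\}$ degeneracies you flag (where $a_2$ takes a different form or case (3) is vacuous, but $a_1$ is binding anyway) all check out.
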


In addition to illustrating the nuance of RH distance values even for simple examples like the complete graph and cycle, such closed-form expressions are also useful in revealing mathematical properties of RH distance. For instance, a cursory analysis of the closed-form expression in Lemma \ref{L:complete} reveals that, while the RH measure clearly satisfies the non-negativity and symmetry axioms defining a distance metric, RH distance does not satisfy the triangle inequality: observe that $\RH{K_3,K_4}+\RH{K_4,K_5}=7/12<2/3=\RH{K_3,K_5}$. A natural, subsequent question is whether RH distance satisfies a {\it $c$-relaxed triangle inequality}, a triangle inequality variant that has been utilized in pattern matching \cite{Fagin1998}, of the form
\[
\RH{X,Y}+\RH{Y,Z} \geq c \cdot \RH{X,Z},
\]
for some fixed $c\leq 1$. However, a closer analysis of the closed-form expression in Lemma \ref{L:complete} reveals that a $c$-relaxed triangle inequality is also impossible.

\begin{claim}[Relaxed triangle inequality impossible]
For any $\epsilon>0$, there exist graphs $F,G,H$ such that 
\[
\frac{\RH{F,G}+\RH{G,H}}{\RH{F,H}}<\epsilon.
\]
\end{claim}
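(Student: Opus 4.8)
The plan is to build all three graphs from complete graphs and exploit the closed form $\RH{K_n,K_m} = \frac{m-n}{n}$ from Lemma \ref{L:complete}. The governing observation is that this cost is normalized by the \emph{smaller} of the two orders, so traversing from a small complete graph to a large one is far cheaper when broken into stages through an intermediate complete graph than when taken in a single step. Concretely, I would fix $F = K_3$ and $H = K_m$ for a large parameter $m$, and insert $G = K_b$ for an intermediate order $b$ with $3 \leq b \leq m$ to be chosen. By Lemma \ref{L:complete} the target ratio becomes
\[
\frac{\RH{F,G}+\RH{G,H}}{\RH{F,H}} = \frac{\dfrac{b-3}{3} + \dfrac{m-b}{b}}{\dfrac{m-3}{3}}.
\]

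Next I would choose $b$ to be an integer near $\sqrt{3m}$, which is the value minimizing the numerator (differentiating $\frac{b-3}{3}+\frac{m-b}{b}$ gives the critical point $b^2 = 3m$); even the cruder choice $b = \ceil{\sqrt{m}}$ suffices. With $b = \Theta(\sqrt{m})$ both summands in the numerator are $\bigOh{\sqrt{m}}$: the first because $\frac{b-3}{3} = \Theta(\sqrt{m})$, and the second because $\frac{m-b}{b} \leq \frac{m}{b} = \Theta(\sqrt{m})$. Hence the numerator is $\bigOh{\sqrt{m}}$ while the denominator $\frac{m-3}{3}$ is $\Theta(m)$, so the whole ratio is $\bigOh{1/\sqrt{m}}$ and tends to $0$ as $m \to \infty$. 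Given any $\epsilon > 0$, I would then take $m$ large enough that this ratio drops below $\epsilon$ and read off explicit graphs $F,G,H$ witnessing the claim.

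Finally I would verify the side conditions needed to invoke Lemma \ref{L:complete}, namely $3 \leq b \leq m$ (immediate for large $m$, since $\sqrt{3m}$ lies between $3$ and $m$ once $m \geq 3$) and that $F,G,H$ are genuine graphs of the stated orders. I expect no serious obstacle: the only real content is the idea of routing through an intermediate complete graph whose order scales like the geometric mean of $3$ and $m$, after which everything reduces to the elementary estimate that a single RH step across a large ``gap'' costs $\Theta(m)$, whereas two steps through $K_b$ cost only $\Theta(\sqrt{m})$. The mild care required lies in handling the integrality of $b$ and confirming the requisite inequalities are strict, both of which are routine.
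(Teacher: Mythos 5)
Your proposal is correct and follows essentially the same route as the paper: both take all three graphs to be complete graphs, apply the closed form of Lemma~\ref{L:complete}, and choose the intermediate order to be the geometric mean of the outer two, so that two hops cost $\Theta(\sqrt{m})$ while the direct hop costs $\Theta(m)$. The only cosmetic difference is that the paper keeps the smallest graph's order as a parameter $x$ and sends the ratio $c\to\infty$, whereas you fix it at $3$ and send $m\to\infty$.
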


\begin{proof}
We take $F,G$ and $H$ as the complete graph on $x,y$ and $cx$ vertices, respectively, for some constant $c> 1$ where $x<y<cx$. Using the closed-formula for the RH distance between complete graphs provided in Lemma \ref{L:complete}, the ratio in question is
\[
f(y) := \frac{1}{c-1}\left( \frac{y}{x} + \frac{cx}{y}-2 \right),
\]
and choosing $y=\sqrt{c}x$ which minimizes $f$, we see that $f(\sqrt{c}x) \to 0$ as $c \to \infty$. 
\end{proof}

Lastly, we consider the possibility that the definition of RH distance itself could be modified in some way so that it satisfies the triangle inequality. Recall that the RH distance between graphs $G$ and $F$ is the maximum of the two directional RH distances between $G$ and $F$. Instead of taking the extreme values of directional RH distance, one approach may be to  ``regularize" RH distance by taking some weighted average of directional RH distances. However, we provide a simple counterexample below to show no positive linear combination of maximum or minimum directional RH distances defines a bona-fide distance metric. 




\begin{claim} There exists no positive linear combination of $\min\{\RHr{F,G},\RHr{G,F} \}$ and $\max\{\RHr{F,G},\RHr{G,F} \}$ which defines a distance metric on graphs.\end{claim}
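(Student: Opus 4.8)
The plan is to reduce this claim to the preceding claim, which already shows that the symmetric smooth RH distance $\RH{\cdot,\cdot} = \max\{\RHr{\cdot,\cdot},\RHr{\cdot,\cdot}\}$ admits no relaxed triangle inequality. Fix positive coefficients $a$ and $b$ and set
\[
D(X,Y) = a\min\{\RHr{X,Y},\RHr{Y,X}\} + b\max\{\RHr{X,Y},\RHr{Y,X}\}.
\]
Since $D$ is manifestly non-negative and symmetric, the only metric axiom that can fail is the triangle inequality, and the key observation is that the $\min$ term will turn out to be harmless because it is sandwiched between $0$ and the $\max$.

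First I would record two elementary bounds. Because $\max\{\RHr{X,Y},\RHr{Y,X}\} = \RH{X,Y}$ and $0 \le \min\{\RHr{X,Y},\RHr{Y,X}\} \le \RH{X,Y}$, we obtain the two-sided estimate
\[
b\,\RH{X,Y} \le D(X,Y) \le (a+b)\,\RH{X,Y}
\]
for every pair $X,Y$: the lower bound simply discards the non-negative $\min$ term, while the upper bound dominates it by the $\max$. These are exactly the inequalities needed to transport a failure of the triangle inequality for $\RH$ over to $D$.

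Next I would invoke the preceding claim with threshold $\tfrac{b}{a+b} \in (0,1)$, which supplies graphs $F,G,H$ (explicitly the complete graphs $K_x, K_{\sqrt{c}x}, K_{cx}$ for $c$ large enough) satisfying $\RH{F,G} + \RH{G,H} < \tfrac{b}{a+b}\,\RH{F,H}$. Chaining the two bounds then yields
\[
D(F,G) + D(G,H) \le (a+b)\paren{\RH{F,G} + \RH{G,H}} < b\,\RH{F,H} \le D(F,H),
\]
so $D(F,H) > D(F,G) + D(G,H)$ and the triangle inequality fails. Since $a,b$ were arbitrary positive reals, no positive linear combination is a metric.

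The only step I expect to carry any subtlety is the handling of the $\min$ term, and the plan dispatches it essentially for free via $0 \le \min \le \max$: on the left side of the triangle inequality the $\min$ can only increase $D(F,H)$, while on the right side it is dominated by the $\max$. Thus all of the genuine difficulty was already absorbed into the preceding claim, and what remains is merely the bookkeeping of choosing the threshold in terms of $a$ and $b$. I would also note that the argument uses only that the coefficient $b$ on the $\max$ is positive, which is precisely why the statement restricts to positive combinations.
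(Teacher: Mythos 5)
Your proof is correct, but it takes a genuinely different route from the paper. You reduce the claim to the preceding one (impossibility of a $c$-relaxed triangle inequality for $\RH{\cdot,\cdot}$) via the sandwich $b\,\RH{X,Y} \le a\min + b\max \le (a+b)\,\RH{X,Y}$ and then choose the threshold $\nicefrac{b}{a+b}$; the paper instead exhibits a single explicit counterexample, namely three trees on six vertices whose six directional distances are computed outright ($\RHr{F,G}=\nicefrac{1}{3}$, $\RHr{G,F}=\nicefrac{1}{12}$, $\RHr{F,H}=1$, $\RHr{H,F}=\nicefrac{1}{2}$, $\RHr{H,G}=\nicefrac{1}{5}$, $\RHr{G,H}=\nicefrac{1}{7}$), and then checks that the triangle inequality fails for every positive pair $(a,b)$ at once. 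The trade-off is real: the paper's triple is \emph{uniform} in the coefficients (one fixed example defeats all positive linear combinations simultaneously), and it is small and concrete, whereas your construction selects the triple of complete graphs as a function of $a$ and $b$. In exchange, your argument is modular, avoids any new computation of directional distances, and actually proves something stronger --- that $D$ fails even a $c$-relaxed triangle inequality for every $c$, since the deficit can be made arbitrarily large relative to $D(F,H)$. One small caveat: your remark that the triangle inequality is ``the only metric axiom that can fail'' overlooks identity of indiscernibles (non-isomorphic graphs with identical ccdhs have $D=0$), but since you go on to break the triangle inequality anyway, the conclusion is unaffected.
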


\begin{proof}
Consider the three trees on 6 vertices illustrated in Figure \ref{fig:trees}. A straightforward computation shows
\begin{alignat*}{2}
 \RHr{F,G} &=\nicefrac{1}{3} &\mbox{ and } \RHr{G,F} &= \nicefrac{1}{12}\\
\RHr{F,H} &=1 &\mbox{ and } \RHr{H,F} &= \nicefrac{1}{2} \\
\RHr{H,G} &=\nicefrac{1}{5} &\mbox{ and } \RHr{G,H}&=\nicefrac{1}{7}
\end{alignat*}
from which we observe that no positive linear combination of the maximum and minimum of the directional RH measures will always satisfy the triangle inequality since for any $a,b>0$, we have $2(a+b/2) > a(\frac{1}{3}+1+\frac{1}{5})+b(\frac{1}{12}+\frac{1}{2}+\frac{1}{7})$. 
\end{proof}

%

\begin{figure}[t]
\begin{subfigure}[b]{0.5\textwidth}
\[
\begin{tikzpicture}[sibling distance=4em, scale=0.4,
  every node/.style = {shape=circle, minimum size=1mm, inner sep=2pt,
    draw, align=center, fill=black}]]
  \node {}
    child { node {} 
    child {node {} }}
    child { node {} }
    child { node {} }
    child { node {} };
\end{tikzpicture}
\qquad \qquad
\begin{tikzpicture}[sibling distance=4em, scale=0.4,
  every node/.style = {shape=circle, minimum size=1mm, inner sep=2pt,
    draw, align=center, fill=black}]]
  \node {}
    child { node {} 
    child {node {} }
    child {node {}}}
    child { node {} }
    child { node {} };
\end{tikzpicture}
\]
\\
\[
\begin{tikzpicture}[sibling distance=4em, scale=0.4,
  every node/.style = {shape=circle, minimum size=1mm, inner sep=2pt,
    draw, align=center, fill=black}]]
  \node {}
    child { node {} }
    child {node {} }
    child { node {} }
        child { node {} }
    child { node {} };
\end{tikzpicture}
\vspace{6mm}
\]
\caption{}\label{fig:trees}
\end{subfigure}
\begin{subfigure}[b]{0.5\textwidth}
\centering
\includegraphics[width=0.8\linewidth]{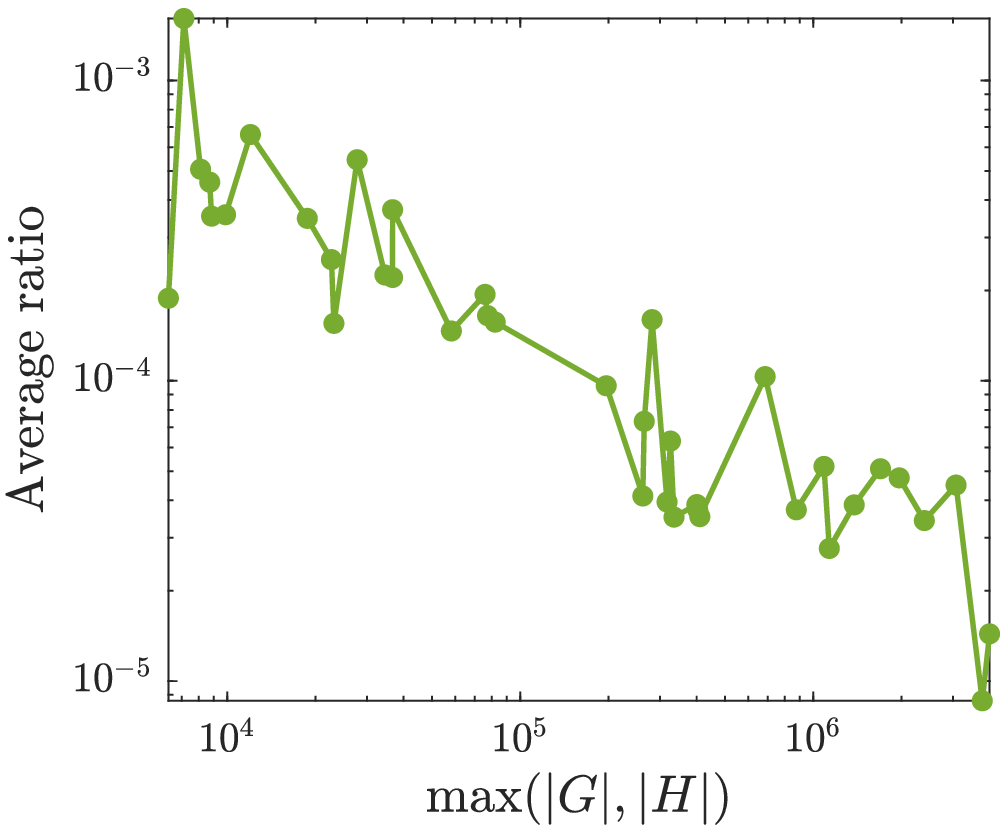}
\caption{}\label{ratio_plot}
\end{subfigure}
\caption{Trees $G$ (top left), $H$ (top right), and $F$ (bottom) (\ref{fig:trees}). The ratio of $\mathcal{RH}(G,H)$ to the max possible RH distance, averaged over all pairs $G,H$ in the SNAP data with $m=\max(|G|,|H|)$ (\ref{ratio_plot}). } 
\end{figure}


In short, the notion of RH distance has seemingly intrinsic barriers to satisfying the triangle inequality. We note that RH distance's status as a bona-fide distance metric may not solely be a question of theoretical interest since performing graph distance computations over objects embedded in a metric space can sometimes have practical benefits, such as faster graph algorithms \cite{Bento2018}. Next, we consider other analytic properties of RH distance useful for understanding its behavior. 

\section{Range, density, and extremal properties of RH distance} \label{sec:analysis}

In this section, we derive several analytic results on RH distance which characterize the range, density and extremal behavior of RH distance values. 
\subsection{Range}

A number of distance metrics, such as KS distance, are (by definition) bounded between 0 and 1. This normalization is a desirable property and facilitates interpretability when comparing distances between pairs of graphs of varying sizes. In the case of RH distance, while the Simpson et al.~\cite{Simpson2015} note ``RH distance can exceed 1", no upper bound is known. Below, we establish a sharp upper bound on RH distance. 

\begin{theorem}\label{fig:maxPoss}
Let $\mathcal{G}_k$ denote the set of all graphs on $k$ vertices. If $n \leq m$ then 
\[ \max_{\substack{F \in \mathcal{G}_n \\ G \in \mathcal{G}_m}}
    \RH{F,G} = \begin{cases} \frac{m}{2} - 1 & n= 2, \\
\frac{m^2}{2m+1} - 1 & n \geq 3,m \geq 4\end{cases}.\]
\end{theorem}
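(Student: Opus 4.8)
The plan is to separate the trivial case $n=2$ from the main case $n\ge 3$, and in the latter to match an explicit extremal construction (lower bound) against a structural comparison argument (upper bound). For $n=2$ the only isolated-vertex-free graph in $\mathcal{G}_2$ is $K_2$, whose smooth ccdh is the segment from $(1,2)$ to $(2,0)$. A direct computation shows that covering the single point $(1,2)$ of $K_2$ is hardest when $G=K_m$, which stays at height $m$ and so forces the box to stretch upward until $2(1+\epsilon)=m$; meanwhile every reverse contribution $\RHr{G,K_2}$ is dominated by this value. Hence $\RH{K_2,K_m}=\tfrac m2-1$, and this is the maximum.

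For $n\ge 3$ I would take $F=K_{1,n-1}$ (a star) and $G=K_m$. The smooth ccdh of the star consists of the drop from $(1,n)$ to $(2,1)$, a plateau at height $1$ out to $(n-1,1)$, and a final drop to $(n,0)$, so by Lemma~\ref{L:flat} it suffices to test $\RHr{F,G}$ at the plateau endpoints. The binding endpoint is $(2,1)$, whose box must reach the sloped part of the ccdh of $K_m$ running from $(m-1,m)$ to $(m,0)$; requiring the upper-right corner $\paren{2(1+\epsilon),\,1+\epsilon}$ to lie on this segment yields $(1+\epsilon)(2m+1)=m^2$, that is, $\epsilon=\tfrac{m^2}{2m+1}-1$. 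The endpoints $(1,n)$ and $(n-1,1)$, as well as the reverse direction $\RHr{K_m,F}$, are then checked to require no more, which establishes the lower bound.

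The crux is the matching upper bound $\RH{F,G}\le\epsilon^{\ast}:=\tfrac{m^2}{2m+1}-1$ for \emph{all} $F\in\mathcal{G}_n,\ G\in\mathcal{G}_m$. I would exploit that $\epsilon^{\ast}\ge 1$ for $m\ge 5$, so $1-\epsilon^{\ast}\le 0$ and every box $\RHbox{(d,\cdot)}{\epsilon^{\ast}}$ has its left and bottom edges weakly left of $x=1$ and below the $x$-axis. By the box-intersection criterion recalled before Lemma~\ref{L:flat}, a nonnegative monotone ccdh can then miss such a box only by passing above and to the right, so with the convention that a ccdh vanishes past its maximum degree the two directional conditions collapse to the scalar inequalities
\[
G(\lambda d)\le\lambda F(d)\ \ (1\le d\le\Delta(F)),\qquad F(\lambda d)\le\lambda G(d)\ \ (1\le d\le\Delta(G)),
\]
for integer $d$, where $\lambda=1+\epsilon^{\ast}=\tfrac{m^2}{2m+1}$. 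These I would verify by cases: for $d=1$ using $F(1)=n\ge 3$, $G(1)=m$, and the arithmetic $\lambda n\ge m$, $\lambda m\ge n$; for $d\ge 3$ using $\lambda d>m$, so the smaller ccdh has already vanished; and for the decisive case $d=2$ using that $2\lambda\in(m-1,m)$ with $1-\{2\lambda\}=\tfrac{m}{2m+1}$, so that the degree bounds $f_G(m-1)\le m$, $f_F(m-1)\le n$ and $f(m)=0$ force $G(2\lambda)\le\lambda$ and $F(2\lambda)\le\lambda$, exactly matching the right-hand sides and explaining the tightness at $(2,1)$.

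The main obstacles I anticipate are (i) pinning down this extremal pair and the identity $(1+\epsilon)(2m+1)=m^2$ governing the sloped-edge intersection, and (ii) the boundary regime $m=4$, where $\epsilon^{\ast}=\tfrac79<1$ and the clean reduction above no longer applies. For $m=4$ I would instead use that $\Delta(F),\Delta(G)\le 3$ keeps $d(1-\epsilon^{\ast})<1$ for every relevant $d$, so the ``below-left'' escape route for a box is ruled out by the first bullet of the intersection criterion (since $F(1),G(1)\ge 3$); the above-right analysis is then identical to the $m\ge 5$ computation and again yields equality. Assembling the $n=2$ computation with the lower and upper bounds for $n\ge 3$ then gives the stated piecewise formula.
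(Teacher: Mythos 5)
Your proposal is correct, and while the $n=2$ case and the lower-bound construction (a star-degreed $F$ pinned at the point $(2,1)$ against the final segment of the ccdh of $K_m$, giving $(1+\epsilon)(2m+1)=m^2$) coincide with the paper's, your upper bound is organized around a genuinely different and cleaner idea. The paper argues geometrically that for $m\ge 5$ every box $\RHbox{(x,y)}{\epsilon_m}$ with $x>2$ or $y>2$ contains a long horizontal or vertical strip, reduces to the points with $F(2)$ or $G(2)\in\set{1,2}$, and then intersects the last line segment of the other ccdh with $\RHbox{(2,1)}{\epsilon_m}\cap\RHbox{(2,2)}{\epsilon_m}$; for $m=4$ it falls back on an explicit table of nine boxes. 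You instead observe that once $1-\epsilon^{\ast}\le 0$ (for $m\ge 5$), or once the left edge of every relevant box lies left of $x=1$ while $H(1)$ exceeds the box bottom (for $m=4$), a monotone nonnegative ccdh can miss a box only by exiting above its upper-right corner, which collapses both directional conditions to the single scalar family $H(\lambda d)\le\lambda\,y$ with $\lambda=\frac{m^2}{2m+1}$, verified by the three cases $d=1$, $d=2$, $d\ge 3$. This unifies the $m\ge5$ and $m=4$ arguments, makes the tightness at $(d,y)=(2,1)$ with $G=K_m$ transparent (equality forces $G(m-1)=m$ and $F(2)=1$), and avoids the paper's case table. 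Two small points of care: your appeal to ``the first bullet of the intersection criterion'' for $m=4$ is really a continuity/monotonicity argument ruling out the below-the-box escape (the paper's first bullet is not by itself a sufficient condition for intersection, so it is better to argue this directly as you in effect do); and in the $d=2$ case one should note explicitly that the inequality is only needed when $2\le\Delta(F)$ (resp.\ $\Delta(G)$), which is what guarantees $F(2)\ge1$ (resp.\ $G(2)\ge1$) on the right-hand side. Neither affects correctness.
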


\begin{proof}
We first consider the case where $n=2$ and $m \geq 3$.  We note that $K_2$ is the unique graph on two vertices with no isolated vertices.  Thus since $\{(1,i) \hspace{1mm} | \hspace{1mm} i \in [2,m]\} \subset \RHbox{(1,2)}{\frac{m}{2}-1} $, we have that 
\[ \max_{\substack{F  \in \mathcal{G}_2 \\ G \in \mathcal{G}_m}} \RH{F,G} > \frac{m}{2}-1 \] if and only if there is some $G \in \mathcal{G}_m$ such that $\RHr{G,K_2}>\frac{m}{2}-1$.

Now we note that $\RHbox{(x,G(x))}{\frac{m}{2}-1}$ contains the point $(1,2)$ for all $x \in Z_{\geq 1}$ when $m \neq 3$. Thus, we are left to consider when $m = 3$. In this case, $\RHr{P_3, K_2} = \RHr{K_3,K_2} = \frac{1}{3}$. Thus, there is no $G \in \mathcal{G}_m$ such that $\RHr{G,K_2} > \frac{m}{2}-1$. Finally, it is easy to verify that $\RHr{K_2,K_m} = \frac{m}{2}-1$, which completes the case when $n = 2$.

For ease of notation, in the remainder of the proof we will define $\epsilon_m$ as $\frac{m^2}{2m+1}-1$.  Suppose now that $F$ has exactly one vertex of degree greater than $1$ and that $G = K_m$.  Then 
\[ \RHbox{(2,1)}{\epsilon_m} = \left[2-2\epsilon_m, \frac{2m^2}{2m+1}\right] \times \left[1-\epsilon_m,\frac{m^2}{2m+1}\right]\] which intersects the ccdh for $G$ precisely on the line between $(m-1,m)$ and $(m,0)$.  Thus $\RH{F,G} \geq \epsilon_m$.  Thus it suffices to show that for arbitrary choices of $F$ and $G$, $\RH{F,G} \leq \epsilon_m$.  

Now consider the case where $3 \leq n$ and $5 \leq m$.  Since $\epsilon_m > 1$ we have that for any $(x,y) \in \Z^2$, $\RHbox{(x,y)}{\epsilon_m}$ contains the set $[0,m] \times \set{y}$ if $x > 2$ and the set $\set{x} \times [0,m]$ if $y>2$.  
In particular, this implies that if $F \in \mathcal{G}_n$ and $G \in \mathcal{G}_m$ then $\RH{F,G} \leq \epsilon_m$ unless one of $F(2)$ or $G(2)$ is in $\set{1,2}$, as both $\RHbox{(1,G(1)}{\epsilon_m}$ and $\RHbox{(1,F(1))}{\epsilon_m}$ contain the points $(1,n)$ and $(1,m)$ since $3 \leq n,m$. We now consider the intersection of the final line segment in a ccdh and the boxes around $(2,1)$ and $(2,2)$.  To this end, let the endpoints of the final line segment be $(\Delta,c)$ and $(\Delta+1,0)$. Now we note that $[0,\frac{2m^2}{2m+1}] \times \set{0} \subset \RHbox{(2,1)}{\epsilon_m} \cap \RHbox{(2,2)}{\epsilon_m}$ and thus contains $(\Delta+1,0)$ unless $\Delta = m-1$.  It is then a simple algebraic exercise to show that the line segment $(m-1,c)$ to $(m,0)$ intersects $\RHbox{(2,1)}{\epsilon_m} \cap \RHbox{(2,2)}{\epsilon_m}$ for all $c \leq m$.  As $c$ is at most $m$ by definition, this completes the this case.  

The only remaining cases are when $m = 3, 4$.  Consider first when $n = m = 3$.  In this case, either the ccdhs are identical or one is $(3,1)$ and the other is $(3,3)$.  Given that the only integer point these differ on is $d =2$, it suffices to consider 
\[\RHbox{(2,1)}{\epsilon_3} = \RHbox{(2,1)}{\frac{2}{7}} = \left[\nicefrac{10}{7},\nicefrac{18}{7}\right] \times \left[\nicefrac{5}{7},\nicefrac{9}{7}\right].\] 
Noting that the point $\paren{\nicefrac{18}{7},\nicefrac{9}{7}}$ is on the line segment between $(2,3)$ and $(3,0)$ completes this case. 

To consider the case where $m = 4$ and $n = 3,4$, we first fix some $F \in \mathcal{G}_3 \cup \mathcal{G}_4$ and $G \in \mathcal{G}_4$ and suppose that $\RH{F,G} > \epsilon_4.$  Noting that the only valid ccdhs on 4 vertices are  $\set{(4), (4,2), (4,4), (4,1,1), (4,3,1),(4,4,2),(4,4,4)}$, it suffices to examine the boxes around a set of 9 points given in Table \ref{T:boxes}.  
\begin{table}
\centering
\begin{tabular}{c | c | c | c | c}
$(x,y)$ & $\RHbox{(x,y)}{\epsilon_4}$ & contains $(1,3)$ & contains $(1,4)$ & contains $[1,3]\times \set{1}$ \\ \hline \hline
$(1,3)$ & $\left[ \nicefrac{2}{9} , \nicefrac{16}{9} \right] \times \left[ \nicefrac{6}{9} , \nicefrac{48}{9} \right] $  & X & X & X\\ \hline
$(1,4)$ & $\left[ \nicefrac{2}{9} , \nicefrac{16}{9} \right] \times \left[ \nicefrac{6}{9} , \nicefrac{64}{9} \right] $ & X & X & X \\ \hline
$(2,1)$ & $\left[ \nicefrac{4}{9} , \nicefrac{32}{9} \right] \times \left[ \nicefrac{2}{9} , \nicefrac{16}{9} \right] $ & & &X\\ \hline
$(2,2)$ & $\left[ \nicefrac{4}{9} , \nicefrac{32}{9} \right] \times \left[ \nicefrac{4}{9} , \nicefrac{32}{9} \right] $ & X &&X \\ \hline
$(2,3)$ & $\left[ \nicefrac{4}{9} , \nicefrac{32}{9} \right] \times \left[ \nicefrac{6}{9} , \nicefrac{48}{9} \right] $ & X & X &X\\ \hline
$(2,4)$ & $\left[ \nicefrac{4}{9} , \nicefrac{32}{9} \right] \times \left[ \nicefrac{8}{9} , \nicefrac{64}{9} \right] $ & X & X&X\\ \hline
$(3,1)$ & $\left[ \nicefrac{6}{9} , \nicefrac{48}{9} \right] \times \left[ \nicefrac{2}{9} , \nicefrac{16}{9} \right] $ & & &X\\ \hline
$(3,2)$ & $\left[ \nicefrac{6}{9} , \nicefrac{48}{9} \right] \times \left[ \nicefrac{4}{9} , \nicefrac{32}{9} \right] $ & X & &X\\ \hline
$(3,4)$ & $\left[ \nicefrac{6}{9} , \nicefrac{48}{9} \right] \times \left[ \nicefrac{8}{9} , \nicefrac{64}{9} \right] $ & X & X&X\\ \hline
\end{tabular}
\caption{Relative Hausdorff  boxes around points relevant to maximum $\RH{F,G}$.}\label{T:boxes}
\end{table}
From Table \ref{T:boxes} it is clear that we can restrict ourselves to the case where one of $F$ or $G$ passes through one of $(2,1)$, $(2,2)$, $(3,1)$ or $(3,2)$.  Furthermore, since these boxes all contain the line segment $[1,3]\times \set{1}$ we have that the other graph must have at least two vertices of degree 3.  Thus we may assume without loss of generality that the ccdh for $F$ passes through one $S = \set{(2,1),(2,2),(3,1),(3,2)}$ and the ccdh for $G$ is either $(4,4,2)$ or $(4,4,4)$.  In particular, the ccdh for $G$ at $\frac{32}{9}$ either passes through $\nicefrac{8}{9}$ or $\nicefrac{16}{9}$, respectively.  As these points are in $\RHbox{s}{\epsilon_4}$ for all $s \in S$, this completes the proof.
\end{proof}

We note the upper bound given by Theorem \ref{fig:maxPoss} allows one to define an RH {\it similarity} measure, in the spirit of cosine similarity, with values always in the interval $[0,1]$. We analyze how such a similarity measure might transform the RH distance values of the SNAP data by comparing each RH value with the maximum possible RH distance. 
That is, for each pair $G,H$ of SNAP graphs, we consider the ratio
\[
\frac{\RH{G,H}}{\frac{m^2}{2m+1}-1},
\]
where $m=\max({|G|,|H|})$. In Figure \ref{ratio_plot}, we plot the mean of this ratio for each value of $m$. We observe RH distance values constitute a decreasing proportion of the maximum possible RH distance. Accordingly, in practice, normalizing RH distance values in this way will likely yield values that are vanishingly small in the size of the larger graph being compared. 
Hence, practitioners may choose to further transform such RH similarity values or simply opt to use (unnormalized) smooth RH distance instead.

\subsection{Density}
When quantifying the similarity between discrete objects like graphs, it is important to understand the breadth of possible values that can be assumed. In the case of the RH distance between graphs, such questions are more meaningful when considered asymptotically, since it is trivially true that for any fixed $n$, there are a finite number of $n$-vertex graphs and hence a limited number of possible RH distance values. Below, we show that RH distance is dense in the real numbers, in the following sense:

\begin{theorem}
Let $c \in (0,\frac{1}{2}]$.  There exists a sequence of pairs of
graphs $\set{(F_{n_i},G_{n_i})}_i $, each on $n_i$ vertices, such that \[\lim_{i \rightarrow
  \infty} \frac{\RH{F_{n_i},G_{n_i}}}{n_i} = c.\]
\end{theorem}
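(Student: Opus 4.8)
The plan is to realize each target $c \in (0,\tfrac{1}{2}]$ as a limiting normalized RH distance by comparing a graph carrying a single vertex of degree larger than $1$ against a clique whose order is tuned to $c$, each padded by a matching so that both graphs have $n$ vertices. Concretely, I would set $F_n = P_3 \sqcup M$, a path on three vertices together with a perfect matching on the remaining $n-3$ vertices, so that $F_n$ has exactly one vertex of degree $2$ and $n-1$ of degree $1$; and $G_n = K_\ell \sqcup M$, a clique of order $\ell = \ell(n)$ together with a perfect matching on the remaining $n-\ell$ vertices, with $\ell$ equal to the nearest integer of suitable parity to $2cn$. Restricting the sequence $n_i \to \infty$ to orders for which both matchings exist removes any parity bookkeeping (an $O(1)$ perturbation in any case). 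The construction is guided by Theorem~\ref{fig:maxPoss}, whose extremal configuration is precisely a single-large-degree graph against a clique and whose extremal value $\frac{m^2}{2m+1}-1 \approx \frac{m}{2}$ arises from the box about the corner $(2,1)$ meeting the clique's final descent; shortening the clique to order $\ell \approx 2cn$ makes that same corner produce distance $\approx \frac{\ell}{2} \approx cn$.

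To compute $\RHr{F_n,G_n}$, I note that the smooth ccdh of $F_n$ has only the corners $(1,n)$, $(2,1)$, and $(3,0)$, so by Lemma~\ref{L:flat}, supplemented by a direct sweep of its two slanted segments, it suffices to find the point of $F_n$ hardest to match against $G_n$. For any point $(x,F_n(x))$ lying below the plateau of $G_n$ at height $\ell$, the box $\RHbox{(x,F_n(x))}{\epsilon}$ first meets $G_n$ along its final descent from $(\ell-1,\ell)$ to $(\ell,0)$, and a short computation gives the required radius as $\epsilon = \frac{\ell^2}{\ell x + F_n(x)} - 1$. Minimizing $\ell x + F_n(x)$ over the ccdh of $F_n$ is then the crux: the quantity is decreasing on $[1,2]$ (as $\ell < n$) and increasing on $[2,3]$ (as $\ell > 1$), so its minimum $2\ell + 1$ is attained exactly at $(2,1)$. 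Hence $\RHr{F_n,G_n} = \frac{\ell^2}{2\ell+1} - 1$, the extremal expression of Theorem~\ref{fig:maxPoss} with $\ell$ replacing $m$, and since $\ell \sim 2cn$ this is $\sim cn$.

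It then remains to bound the opposite direction, showing that $\RHr{G_n,F_n}$ stays bounded as $n \to \infty$ so that $\RH{F_n,G_n} = \RHr{F_n,G_n}$ eventually. Reducing again to the corners $(1,n)$, $(2,\ell)$, $(\ell-1,\ell)$, $(\ell,0)$ of $G_n$ via Lemma~\ref{L:flat}, I would match each against the ccdh of $F_n$ (which is supported on $x \in [1,3]$) at a cost depending only on $c$: the corner $(1,n)$ matches exactly, $(2,\ell)$ costs $\approx c$, and the two right-hand corners cost $\approx 1$ since matching a point at $x \approx \ell$ to the support $[1,3]$ of $F_n$ forces $\epsilon \to 1$. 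Dividing by $n_i$ and letting $i \to \infty$ then gives $\RH{F_{n_i},G_{n_i}}/n_i \to c$.

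The main difficulty I anticipate lies not in the headline estimate at $(2,1)$ but in ruling out that some other point — in particular one interior to a slanted segment of $F_n$, which Lemma~\ref{L:flat} does not directly address — matches $G_n$ more cheaply and thereby inflates the distance past $cn$. The closed form $\epsilon = \frac{\ell^2}{\ell x + F_n(x)} - 1$, together with the monotonicity of $\ell x + F_n(x)$ on each segment, resolves this by pinning the extremum at $(2,1)$; one must additionally confirm that for these low points the descent, rather than the plateau at height $\ell$, is the nearer feature of $G_n$, which holds because the operative radius $\epsilon \approx \frac{\ell}{2}$ keeps the box strictly below height $\ell$.
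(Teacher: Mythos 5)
Your construction is correct, and it takes a genuinely different route from the paper's. The paper writes $\frac{1}{c}=k+\delta$ with $k\in\Z_{\geq 2}$ and $\delta\in[0,1)$, takes $G_{n_i}$ to be an $r_i$-regular graph with $r_i=\floor{\frac{k}{k+\delta}n_i}-1$ and $F_{n_i}$ a $(k-2)$-regular graph plus a dominating vertex, and evaluates the distance exactly as $\tau=\frac{(r_i-k+1)n_i-1}{kn_i+1}$; there the target $c$ is encoded jointly in the plateau length $k-1$ of $F$ and the regularity $r_i$ of $G$, and the hard point is $(k,1)$ against the final descent of the regular graph. You instead freeze the hard point at $(2,1)$ (a star-degreed-type graph padded by a matching) and encode $c$ solely in the clique order $\ell\approx 2cn$, so the governing computation is literally the extremal one from Theorem~\ref{fig:maxPoss} with $\ell$ in place of $m$; this buys a cleaner reuse of existing machinery and lighter bookkeeping (one direction is the single closed form $\frac{\ell^2}{2\ell+1}-1$, the other only needs the crude bound $\RHr{G_n,F_n}<1$, obtained by matching each plateau point $(d,\ell)$ of $G_n$ to the $d'\in[1,2]$ with $F_n(d')=\ell$), at the cost of not producing an exact value of $\RH{F_n,G_n}$ for finite $n$. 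Two small repairs: Definition~\ref{def:continuousRH} quantifies only over integer $d\in\{1,\dots,\Delta(F_n)\}=\{1,2\}$, so your sweep over the interiors of the slanted segments of $F_n$ is unnecessary --- which is fortunate, because your monotonicity claim that $\ell x+F_n(x)$ decreases on $[1,2]$ needs $\ell<n-1$ and fails at $c=\frac12$, $\ell=n$; the points near $x=1$ where that quantity could dip below $2\ell+1$ sit at height close to $\ell$, and their boxes reach the plateau of $G_n$ cheaply, so the formula $\frac{\ell^2}{\ell x+F_n(x)}-1$ does not govern them in any case. With those caveats the limit $\frac{1}{n}\paren{\frac{\ell^2}{2\ell+1}-1}\to c$ goes through for every $c\in(0,\frac12]$ along the parity-compatible subsequence you describe.
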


\begin{proof}
First we note that $\frac{1}{c}$ can be expressed as $k + \delta$
where $k$ is a positive integer that is at least 2 and $\delta \in [0,1)$. Now let $\set{n_i}$ be the sequence of odd positive integers such that $2k+4 \leq n_1$ and $\ceil{\frac{\delta n_i}{k + \delta}} = \ceil{c\delta n_i}$ is even.  We note that this sequence is infinite and has a bounded gaps between successive entries.  
In particular, for every (sufficiently large) positive integer $t$ there exists an odd $n$ such that $\ceil{\frac{\delta n}{k+\delta}} = 2t$.  To see this, we note that it suffices to show that there is an odd integer such that $\paren{2t-1} <
  \frac{\delta}{k+\delta} n \leq 2t$.  Rearranging this expression, we get
\[2t\paren{1+\frac{k}{\delta}} - \paren{1 + \frac{k}{\delta}} < n \leq
  2t\paren{1+\frac{k}{\delta}}.\] As the gap between the upper and lower bounds on $n$  
is $1 + \frac{k}{\delta} > 3$, there is at least one odd integer for every
choice of $t$ and the gap between successive such odd integers is
bounded by a function of $k$ and $\delta$.  

We now define $G_{n_i}$ as a $r_i$-regular graph on $n_i$ vertices, where $r_i = \floor{\frac{k}{k+\delta}n_i} - 1$.  Note that 
$n_i - r_i = n_i - \floor{\frac{k}{k+\delta}n_i}+1 = \ceil{\frac{\delta}{k+\delta}n_i}  +1$, which is odd.  Thus $n_i$ and $r_i$ have opposite parities, and so $r_in_i$ is even and such a graph exists.  We form $F_{n_i}$ by creating a $(k-2)$-regular graph on $n_i -1$ vertices and adding a dominating vertex.   Note that the ccdh of $G_{n_i}$ is $n_i$ repeated $r_i$
times while the ccdh of $F_{n_i}$ is $n_i$ repeated $k-1$ times followed
by $1$, $n_i-k$ times.

We will show that $\RH{F_{n_i},G_{n_i}} = \tau$ where $\tau = \frac{\paren{r_i - k + 1}n_i -1}{kn_i + 1}$.  We first note that 
\[ r_i \geq \floor{\frac{k(2k+4)}{k+\delta}} - 1 \geq \floor{\frac{2k^2+4k}{k+1}} - 1 = 2k \] 
and thus the ccdhs $G_{n_i}$ and $F_{n_i}$ coincide on the interval $[1,k-1]$.  Thus by Lemma \ref{L:flat}, in order to show that $\RH{F_{n_i},G_{n_i}} \leq \tau$, it suffices to show that $F_{n_i}$ intersects $\RHbox{(r_i,n_i)}{\tau}$ and that $G_{n_i}$ intersects both $\RHbox{(k,1)}{\tau}$ and $\RHbox{(n_i-1,1)}{\tau}$.  Now as 
$r_i -k +1 \geq 2k -k +1 = k+1$, we have that $\tau > 1$.  Thus, trivially, $F_{n_i}$ intersects $\RHbox{(r_i,n_i)}{\tau}$ and $G_{n_i}$ intersects $\RHbox{(n_i-1,1)}{\tau}$.   Now consider
$\RHbox{(k,1)}{\tau}$ and
  note that its upper right coordinate is \[\paren{
     \frac{\paren{r_i+1}n_i}{kn_i+1}k,
     \frac{\paren{r_i+1}n_i}{kn_i+1}}.\] 
Since \[ r_i <  \frac{\paren{r_i+1}n_i}{kn_i+1}k < r_i+1,\] and 
\[ -n_i\paren{ \frac{\paren{r_i+1}n_i}{kn_i+1}k - r_i} + n_i =  -n_i\frac{kn_i - r_i}{kn_i+1} + n_i  = \frac{ -kn_i^2 + r_in_i + kn_i^2 + n_i}{kn_i+1} = \frac{\paren{r_i+1}{n_i}}{kn_i+1},\] we have that $\RHbox{(k,1)}{\tau}$ touches $G_{n_i}$ on the line between $(r_i,n_i)$ and $(r_i+1,0)$.  Further, since $\frac{\paren{r_i+1}n_i}{kn_i+1} < n_i$, this implies that $\RH{F_{n_i},G_{n_i}} = \tau$.   

Finally, noting that 
\[ \lim_{i \rightarrow \infty} \frac{1}{n_i}
  \frac{(r_i-k+1)n_i-1}{kn_i+1} = \lim_{i \rightarrow \infty} \frac{
    \paren{\floor{\frac{k}{k+\delta}n_i} - k}n_i - 1}{n_i(kn_i+1)} =
  \frac{1}{k+\delta} = c,\]
completes the proof.

\end{proof}

\subsection{Extremal edge deletion}
In this section, we aim to uncover which minimal change in graph structure results in a maximal change in RH distance. On this topic, recall that in \cite{Matulef2017, Stolman2017} Matulef and Stolman give an example of two $n$-vertex graphs differing only by a single edge, yet with non-vanishing discrete RH distance and vanishing smooth RH distance, in order to to argue smooth RH distance fixes flaws in the discrete RH distance. Here, we investigate to what extent this property of (smooth) RH distance holds, not only for their particular example, but in general. Namely, we ask: over all possible pairs of $n$-vertex graphs that differ by an edge, what is the maximum (smooth) RH distance, and which pair of graphs achieves this maximum? 

As we soon will show, an RH extremal pair of edge-differing graphs is given by the star graph\footnote{Recall that a star graph on $n$ vertices is the complete bipartite graph, $K_{1,n-1}$.} and a ``perturbed" star graph containing an additional edge between pendant (i.e. degree 1) vertices. More generally, the extremal pair is given precisely by any such ``star-degreed" graph, i.e.:

\begin{definition}
An $n$-vertex graph with precisely one degree $k\geq 2$ vertex and $n-1$ pendant vertices is called {star-degreed}.
\end{definition}
Besides the star graph itself (on at least 3 vertices), disconnected graphs consisting of the disjoint union of a star on $k\geq 3$ vertices and $\nicefrac{n-k}{2}$ many disjoint edges are also star-degreed. Below, we calculate the discrete and smooth RH distance between a star-degreed graph $S$ and graph $S^*=S\cup e$ where $e$ is any edge between pendant vertices in $S$.

\begin{claim}[Star-degreed vs perturbed star-degreed]
For $n \geq 5$, let $S$ be a star-degreed graph, and let $S^*=S \cup e$ where $e$ is any edge between pendant vertices in $S$. 
Then 
\begin{enumerate}
\item[$(i)$] $\RHrd{S,S^*}=1/2$ and $\RHrd{S^*,S}=2/3$.
\item[$(ii)$] $\RHr{S,S^*}=2/5$ and $\RHr{S^*,S}=\frac{2}{2n+1}$.
\end{enumerate}
\end{claim}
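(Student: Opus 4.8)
The plan is to first write down both complementary cumulative degree histograms explicitly and observe that they agree at every integer degree but one. Let $k\ge 2$ denote the degree of the unique non-pendant (``hub'') vertex of $S$. Since joining two pendant vertices by an edge promotes exactly two degree-$1$ vertices to degree $2$, passing from $S$ to $S^*$ raises $N(2)$ from $1$ to $3$ and changes no other entry. Thus the ccdh of $S$ is $n$ at $d=1$ and $1$ at every integer $2\le d\le k$ (and $0$ thereafter), while the ccdh of $S^*$ is $n$ at $d=1$, $3$ at $d=2$, and $1$ at every integer $3\le d\le k$. (This presumes $k\ge 3$, so that $S^*$ genuinely has value $1$ at $d=3$; this holds for the extremal pair of interest, the star graph on $n\ge 5$ vertices.) Consequently, in each of the four quantities the only integer probe point that can contribute is $d=2$: at $d=1$ and at every integer $d\ge 3$ the relevant point already lies on the other (smooth) ccdh, so its box is intersected for all $\epsilon\ge 0$. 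Lemma \ref{L:flat} confirms that the value-$1$ plateau contributes nothing beyond its endpoints. This reduces all four computations to a single active point.

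For part $(i)$ both discrete computations amount to matching the $d=2$ entry to an integer degree of the other graph. For $\RHrd{S,S^*}$ the point is $(2,1)$: matching to $d'=2$ forces the frequency error $|1-3|=2$, which is ruinous, whereas matching to $d'=3$, where $S^*$ again has value $1$, incurs no frequency error and a degree error $|2-3|=1$, giving $\epsilon=1/2$; checking that $d'=1$ is worse yields $\RHrd{S,S^*}=1/2$. For $\RHrd{S^*,S}$ the point is $(2,3)$, and $S$ has value $1$ at both $d'=2$ and $d'=3$, so the binding constraint is the frequency error $|3-1|=2\le 3\epsilon$, giving $\epsilon=2/3$ (while $d'=1$, forcing error $|3-n|$, is worse for $n\ge 5$). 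Hence $\RHrd{S^*,S}=2/3$.

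For part $(ii)$ the same single point governs each direction, but now $d'$ ranges over a continuum and I must locate where the relative-Hausdorff box first meets the piecewise-linear smooth ccdh of the other graph. For $\RHr{S,S^*}$ I compare $\RHbox{(2,1)}{\epsilon}$ with the segment of the smooth ccdh of $S^*$ joining $(2,3)$ to $(3,1)$; a short calculation shows this segment passes through the upper-right corner $\bigl((1+\epsilon)2,(1+\epsilon)1\bigr)$ of the box exactly when $5\epsilon=2$, i.e. $\epsilon=2/5$, while the flat portion at height $1$ is reached only once the box widens to $x=3$, at cost $\epsilon=1/2>2/5$. For $\RHr{S^*,S}$ I compare $\RHbox{(2,3)}{\epsilon}$ with the steep descent of the smooth ccdh of $S$ from $(1,n)$ to $(2,1)$; solving for when this segment passes through the lower-left corner $\bigl((1-\epsilon)2,(1-\epsilon)3\bigr)$ gives $\epsilon(2n+1)=2$, i.e. $\epsilon=2/(2n+1)$, whereas reaching $S$'s value-$1$ plateau instead would cost $\epsilon=2/3$. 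Hence the claimed values.

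The main obstacle is not any individual computation but the case analysis confirming, in each direction, that the box first meets the \emph{correct} segment of the other ccdh at the stated $\epsilon$ and misses the entire smooth ccdh for every smaller value. For the single active point this requires correctly ordering the candidate intersections with the sloped segment, the value-$1$ plateau, and the descent from $(1,n)$, and verifying that the identified corner lands within the relevant segment's domain (its $x$-coordinate lying in $[1,2]$ or $[2,3]$) rather than on its extension. I expect the $\RHr{S^*,S}$ direction to demand the most care: the box around $(2,3)$ could a priori meet $S$ either along its steep left descent or along its plateau, and only the former produces the small value $2/(2n+1)$, so ruling out an earlier intersection is precisely where the explicit corner computation is indispensable.
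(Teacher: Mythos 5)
Your proposal is correct and follows essentially the same route as the paper's proof: reduce everything to the single point $d=2$ where the ccdhs differ, then compute the binding constraint in each of the four directions (your corner-of-the-box computations are just the geometric form of the paper's rearranged inequalities $\abs{F(2)-G((1+\epsilon)2)}\leq \epsilon F(2)$, etc.). Your explicit caveat that the hub degree must be at least $3$ (so that $S^*$ takes value $1$ at $d=3$) is a detail the paper leaves implicit, and is a worthwhile observation since the stated claim silently excludes star-degreed graphs whose non-pendant vertex has degree exactly $2$.
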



\begin{proof}
Letting $F$ and $G$ denote the ccdhs of $S$ and $S^*$, respectively, it is clear that $F(2)=1$, $G(2)=3$, and $F(d)=G(d)$ for all other $d \in \mathbb{Z}_{\geq 1}$. We first prove $(i)$. Fixing $d=2$, we note that $\epsilon^*=\frac{1}{2}$ is the minimum $\epsilon$ such that the set of integers $\{\lceil(1-\epsilon)d\rceil,\dots,\lfloor(1+\epsilon)d\rfloor \} \not= \{2\}$. Then since $\{\lceil(1-\epsilon^*)d\rceil,\dots,\lfloor(1+\epsilon^*)d\rfloor \}=\{1,2,3\}$ and 
\[
\min_{d' \in \{1,2,3\}} \frac{|F(2)-G(d')|}{F(2)} = 0  < \frac{|F(2)-G(2)|}{F(2)}=2,
\]
we have that $\RHrd{S,S^*}=1/2$. Now, switching the roles of $F$ and $G$, we observe $|G(2)-F(d')|/G(2)\geq \frac{2}{3}$ for all $d'$ when $n\geq 5$, with equality holding if $d'=2$
; hence $\RHrd{S^*,S}=2/3$.

Next, we prove $(ii)$. Since for $d' \in [1,2]$, we have
\[
\frac{|F(2)-G(d')|}{F(2)} \geq \frac{|F(2)-G(2)|}{F(2)},
\]
it suffices to restrict attention to $d' \in [2,3]$. Rearranging $\frac{|F(2)-G((1+\epsilon)2)|}{F(2)}\leq \epsilon$ yields $\epsilon\geq 2/5$; hence $\RHr{S,S^*}=2/5$. Now, switching the roles of $F$ and $G$, we observe $|G(2)-F(d')|/G(2)=2/3$ for all $d'\geq 2$. For $d' \in [1,2]$, rearranging $\frac{|G(2)-F((1-\epsilon)2)|}{G(2)}\leq \epsilon$ yields $\epsilon\geq \frac{2}{2n+1}$; hence $\RHr{S^*,S}=\frac{2}{2n+1}$.
\end{proof}

Below, we prove that the star-degreed and perturbed star-degreed graphs considered above form the extremal pair maximizing RH distance over all $n$-vertex graphs differing by an edge. 

\begin{theorem} \label{thm:ext}
Given $G=(V,E)$ and $e \notin E$, let $G \cup e$ denote the graph on $V$ with $E=E(G) \cup e$. Then
\[
\max_{G} \RH{G,G\cup e} = 2/5,
\]
which is achieved by taking $G$ to be a star-degreed graph and $e$ as any edge between pendant vertices in $G$. 
\end{theorem}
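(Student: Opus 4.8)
The lower bound is already in hand: by the preceding Claim, for a star-degreed $S$ on $n \ge 5$ vertices and $S^* = S \cup e$ we have $\RHr{S,S^*} = 2/5$ and $\RHr{S^*,S} = \frac{2}{2n+1} < 2/5$, so $\RH{S,S^*} = 2/5$. Thus the work is entirely in the upper bound $\RH{G,G\cup e} \le 2/5$ for every $G$ and every $e = \{u,v\} \notin E(G)$. Write $F$ for the ccdh of $G$ and $H$ for the ccdh of $G\cup e$. My starting point is the elementary observation that adding $e$ raises the degrees of $u$ and $v$ by one each, so for each $k$ the difference $a_k \coloneqq H(k) - F(k)$ is exactly the number of endpoints of $e$ of $G$-degree $k-1$. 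Hence $H = F$ except at the bump point(s) $p \coloneqq d_u+1$ and $q \coloneqq d_v+1$, where $H$ exceeds $F$ by a total of $2$ (a single bump $a_p = 2$ when $d_u = d_v$, or two unit bumps at distinct $p,q$). The linchpin of the whole argument will be the structural inequality $F(k-1) - F(k) \ge a_k$ at each bump point, which holds because each of the $a_k$ endpoints of degree exactly $k-1$ is counted by $F(k-1)$ but not by $F(k)$.

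Because $F$ and $H$ agree at every integer degree other than $p$ and $q$, and both smooth ccdhs pass through their common integer points, the RH box centred on $(d,F(d))$ trivially meets the other curve for every non-bump $d$, contributing $0$. So both $\RHr{F,H}$ and $\RHr{H,F}$ are governed solely by the bump point(s). The plan is to bound, in each direction and at each bump point, the least $\epsilon$ for which the opposing smooth ccdh meets $\RHbox{(p,\cdot)}{\epsilon}$, by locating the linear segment on which the boundary line of the box crosses the piecewise-linear ccdh and reading off $\epsilon$ from a short similar-triangles computation.

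For the forward direction $\RHr{F,H}$ at a bump point $p \le \Delta(F)$ (so $F(p)\ge 1$), the curve $\varphi_H$ starts at height $H(p)=F(p)+a_p$ and, moving right, crosses height $(1+\epsilon)F(p)$ on $[p,p+1]$, giving the requirement $\epsilon \ge \frac{a_p}{F(p)+p(F(p)-F(p+1))+p\,a_p}$. Using $F(p)\ge 1$, $F(p)-F(p+1)\ge 0$, and $p\ge 2$, the right-hand side is at most $\frac{a_p}{1+p\,a_p}$, which equals $\frac{2}{5}$ when $a_p=2,\,p=2$ and is strictly smaller otherwise; equality forces precisely the star-degreed configuration ($F(2)=F(3)=1$ with both endpoints pendant). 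For the reverse direction $\RHr{H,F}$, the analogous computation (moving left from $\varphi_F(p)=F(p)$ up along $[p-1,p]$) yields $\epsilon \ge \frac{a_p}{p(F(p-1)-F(p))+F(p)+a_p}$, and here the structural inequality $F(p-1)-F(p)\ge a_p$ gives $\RHr{H,F} \le \frac{a_p}{a_p(p+1)+F(p)} \le \frac{1}{p+1} \le \tfrac13$. Combining, $\RH{G,G\cup e}=\max\{\RHr{F,H},\RHr{H,F}\}\le 2/5$, matching the lower bound.

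I expect the main obstacle to be the geometric bookkeeping needed to justify the two crossing formulas in full generality and to dispatch the degenerate configurations. Specifically, one must confirm the box-boundary line actually meets the ccdh on the claimed segment, falling back on monotonicity and an intermediate-value argument when the crossing is pushed onto a neighbouring segment, and handle the cases where the two bumps are coincident ($a_p=2$), adjacent ($p,q=p+1$, so that $\varphi_H(p+1)\ne F(p+1)$), or where an endpoint has maximum degree so that $F(p)=0$ and $p$ is tested only in the reverse direction. The subtle point is the adjacent case: a naive flat profile would seem to drive the forward bound above $2/5$, but a bump at $p+1$ forces $F(p)-F(p+1)\ge a_{p+1}\ge 1$ by the structural inequality, which keeps every unit-bump contribution below $\tfrac13$. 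Thus the coincident size-$2$ bump at $p=2$ remains the unique extremal case, and it is exactly the star-degreed graph.
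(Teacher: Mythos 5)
Your proof is correct and shares the paper's skeleton: both arguments reduce immediately to the one or two ``bump'' degrees $d(x)+1,d(y)+1$, and both hinge on the same combinatorial inequality (your $F(k-1)-F(k)\ge a_k$ is exactly the paper's $G(d-1)=G(d)+n_{d-1}\ge F(d)$). Where you genuinely diverge is in how $2/5$ is extracted at a bump. The paper never computes the minimal $\epsilon$: it fixes the frequency error at exactly $\frac{2}{5}$, applies the intermediate value theorem on the single ccdh segment adjacent to the bump to produce a witness $d'$ at that prescribed height, and checks $|d-d'|\le\frac{4}{5}\le\frac{2}{5}d$ --- one short verification per direction, with no need to locate where a box first touches the curve. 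You instead compute the exact box-corner crossing value on the relevant segment (the same geometry as \textsc{SmoothRHdist}) and then maximize the resulting rational expressions over $(p,a_p,F(p),F(p\pm1))$. Your route costs more bookkeeping --- you must confirm the crossing lands on the claimed segment and separately dispatch the coincident, adjacent, and maximum-degree configurations, all of which you correctly identify and resolve via the structural inequality --- but it buys sharper information: exact per-bump values, the stronger bound $\RHr{G\cup e,G}\le\frac{1}{3}$ in the reverse direction, and a more precise equality analysis (your condition $F(2)=F(3)=1$ with both endpoints pendant correctly excludes star-degreed graphs whose non-pendant vertex has degree $2$, which only attain $\frac{2}{7}$ and which the paper's statement nominally sweeps in). Both proofs are sound; yours is the more computational but also the more informative.
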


\begin{proof}
Let $G= (V,E)$ be a graph of order $n$ and $e = xy \notin E$. Let $F$ denote the ccdh of $G \cup e$. Then 
\[
F(d) = G(d)+\delta_{d, d(x)+1} +\delta_{d,d(y)+1},
\]
where $\delta_{i,j}$ denotes the Kronecker delta function. Thus, we are left to consider when $d$ equals either $d(x)+1$ or $d(y)+1$. Without loss of generality, let $d = d(x)+1$. 
Next note that for any graph $H$, $H(1) = |V(H)|$ and $H(k+1) = H(k)-n_k$, for $k \in \mathbb{Z}_{\geq 1}$, where $n_k$ is the number of vertices in $H$ with degree $k$. From this observation, it follows that  $G(d-1) = G(d)+n_{d-1} \geq F(d)$. Thus, since $G(d-1) \geq F(d) > G(d) \geq F(d)-2$ there exists $d'$ such that $d-d' \leq \frac{4}{5} \leq \frac{2}{5}d$ and $G(d') = F(d) - \frac{2}{5}$, and so $\RHr{F,G} \leq \frac{2}{5}$. Now from the observation above, it also follows that $F(d+1) \leq G(d)$. Thus, since $F(d+1) \leq G(d) < F(d) \leq G(d)+2$, there exists $d'$ such that $d'-d \leq \frac{2}{5}d$ and $F(d') = G(d) +\frac{2}{5}$, and so $\RHr{G,F} \leq \frac{2}{5}$, as well. Therefore $\RH{G,F} \leq \frac{2}{5}$.   Now suppose that $\RH{G,F} = \frac{2}{5}$. Then from our calculations above, we observe that $d = 2$ and $G(d) =1$ so $G$ must be star-degreed. Further, in this case when, for all $d'$ such that $|d-d'| \leq \frac{2}{5}d$, $|G(d)-F(d')| \geq \frac{2}{5}G(d)$, and so
\[
\max_{G} \RH{G,G\cup e} = 2/5,
\]
with equality achieved precisely when $G$ has no isolated vertices and one non-pendant vertex.
\end{proof}

\begin{figure}
\begin{subfigure}[b]{0.5\textwidth}
\[
\includegraphics[width=0.75\linewidth]{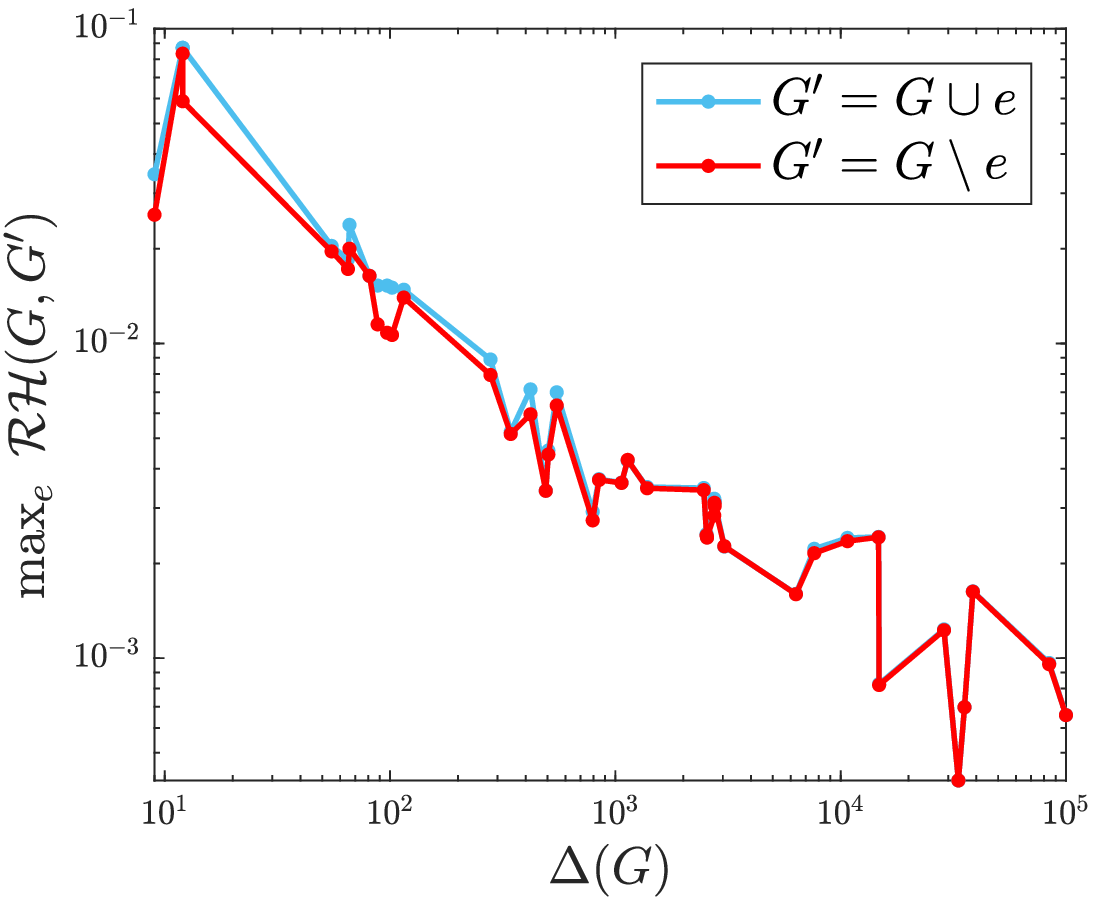}
\]
\caption{}\label{fig:edge_max}
\end{subfigure}
\begin{subfigure}[b]{0.5\textwidth}
\[
\includegraphics[width=0.75\linewidth]{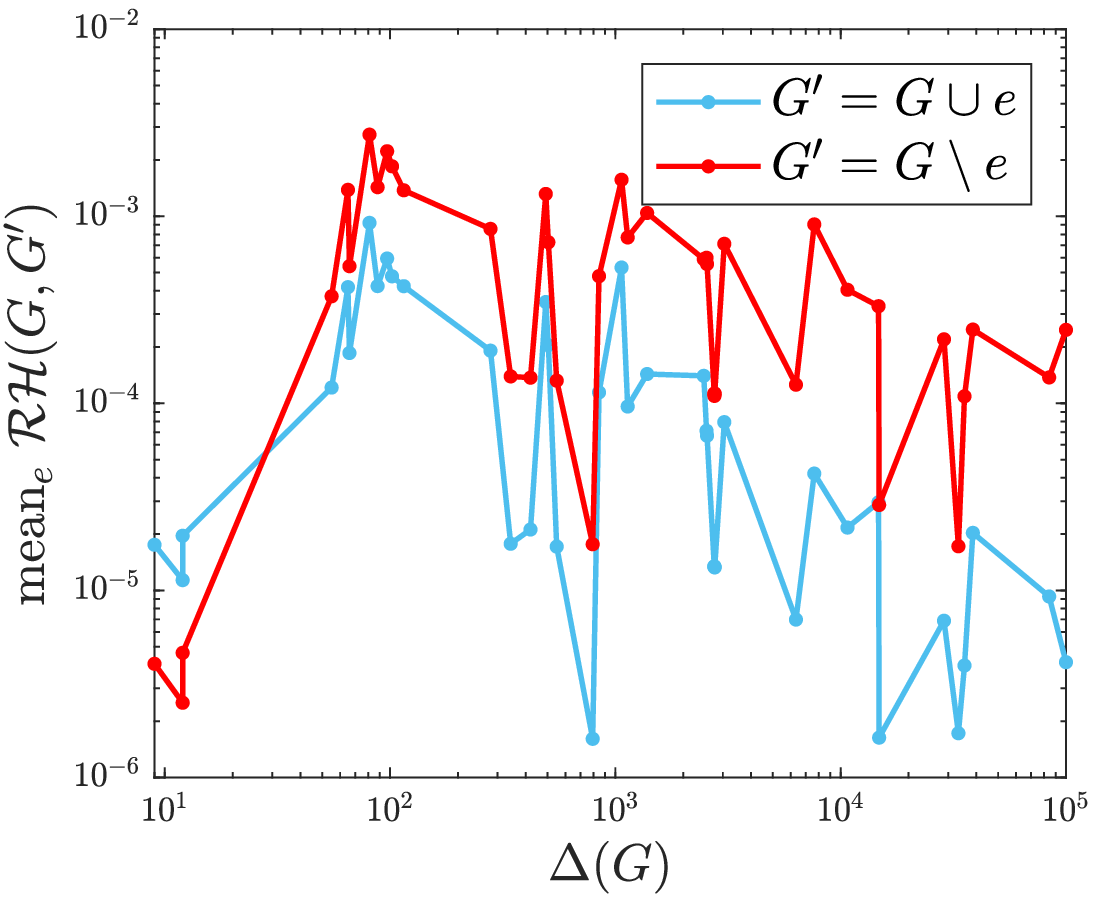}
\]
\caption{}\label{fig:edge_avg}
\end{subfigure}
\caption{The maximum (\ref{fig:edge_max}) and average (\ref{fig:edge_avg}) RH distance produced by single edge additions and deletions on the SNAP graphs. }
\end{figure}

This result shows that RH distance between graphs that differ by a single edge can be nonvanishing as the maximum of the degree tends towards infinity. However, the extremal family of graphs given by Theorem \ref{thm:ext} (i.e. star-degreed graphs) is clearly highly-structured. To investigate whether edge-perturbation can produce unusually large RH distance in practice, we again turn to the SNAP data. For each graph, we compute the maximum RH distance over all possible single edge deletions and additions; that is,
\[
\max_{e\in E(\overline{G})} \RH{G,G\cup e} \mbox{  and  } \max_{e \in E(G)} \RH{G,G\setminus e},
\]
where $\overline{G}$ denotes the complement of $G$. In addition to the maximum, we also computed the average RH distance over edge additions and deletions. Figure \ref{fig:edge_max} shows the maximum RH distance between real graphs differing by an edge decreases in maximum degree of the graph. Figure \ref{fig:edge_avg} presents the average RH distance of single edge perturbations. While the same decreasing trend observed for the maximum is not apparent for the average, the results affirm that the expected RH distance produced by a random edge deletion or addition is small in practice. Both of these results suggest the behavior exhibited by star-degreed graphs is unlikely to be encountered in real data.

\section{Conclusion and future work} \label{sec:conc}

We conducted an algorithmic and analytic study of graph Relative Hausdorff distance, a newly proposed measure for quantifying similarity between degree distributions. Improving upon quadratic algorithms, we devised a linear time algorithm for computing RH distance. Furthermore, we analyzed the basic properties of RH distance, answering a number of natural questions concerning RH distance's status as a bona-fide metric, its range and density of possible values, and its extremal behavior with regard to single edge deletions. While RH distance has so far been mainly applied in the context of evaluating streaming and sampling algorithms \cite{Eden2018, Matulef2017, Simpson2015, Stolman2017}, it is by no means limited to these applications. Indeed, the work in \cite{rhCyber} shows RH distance can be used as anomaly detection method in time-evolving graphs, and is sometimes more sensitive to anomalies than the computationally-expensive edit distance. Nonetheless, more work is needed to determine how RH distance's utility in applications; in this regard, additional comparative studies of RH distance against more established metrics whose behavior is better understood would be valuable. 
Last but not certainly least, we have not explored RH distance in stochastic settings. What is the expected RH distance between two instantiations of a random graph model, or between the expected degree sequence and that of a realization of the random graph model? Such results would not only be of interest in themselves, but provide a new way of quantifying the precision and accuracy of generative graph models in preserving degree distribution.


\section*{Acknowledgments}
We thank Emilie Purvine and Paul Bruillard for their helpful discussions and Carlos Ortiz-Marrero for his assistance with processing the SNAP datasets.  We also thank the anonymous referees whose comments improved the overall quality of this paper.

\bibliographystyle{siamplain}
\bibliography{RH_refs}

\end{document}